\def\bfm#1{\boldsymbol{#1}} %bold math 
\def\RR{\mathbb{R}}
\newtheorem{theorem}{Theorem}[section]
\newtheorem{lemma}[theorem]{Lemma}
\theoremstyle{definition}
\newtheorem{example}[theorem]{Example}
\journal{Computer Aided Geometric Design}
\begin{document}

\begin{frontmatter}

\title{Curvature-Based Optimal Polynomial Geometric Interpolation of Circular Arcs}

\author[address1]{Ema \v{C}e\v{s}ek\corref{corauth}}
\ead{ema.cesek@fmf.uni-lj.si}
\cortext[corauth]{Corresponding author}

\author[address1,address2]{Ale\v{s} Vavpeti\v{c}}
\ead{ales.vavpetic@fmf.uni-lj.si}

\address[address1]{Faculty of Mathematics and Physics, University of Ljubljana, 
Jadranska 19, Ljubljana, Slovenia}
\address[address2]{Institute of Mathematics, Physics and Mechanics, Jadranska 19, Ljubljana, Slovenia}

\begin{abstract}
The problem of the optimal approximation of circular arcs by parametric polynomial curves is considered. The optimality relates to the curvature error. Parametric polynomial curves of low degree are used and a geometric continuity is prescribed at the boundary points of the circular arc. Analysis is done for cases of parabolic $G^0$, cubic $G^1$ and quartic $G^2$ interpolation. The comparison of the approximation of circular arcs based on curvature with the approximation based on radial error is provided.
\end{abstract}

\begin{keyword}
circular arc, curvature approximation, geometric interpolation  \sep 
\MSC[2010] 65D05 \sep  65D07 \sep 65D17
\end{keyword}

\end{frontmatter}

%\linenumbers

\section{Introduction}\label{sec:introduction}
The approximation of circular arcs is not only interesting from a theoretical perspective but also important for practical applications. Many control systems use parametric polynomials as basic building blocks. Since circular arcs cannot be represented in this way, it is important to find the best possible polynomial approximations. Several error functions can be used to measure the quality of approximants. If we are only interested in the visual image of the approximant, it is probably best to minimise the Hausdorff distance between the circular arc and the approximant. It is well known that the Hausdorff distance between a circular arc and a curve is (under some conditions) equal to the maximum of the radial distance between the arc and the approximant \cite{Jaklic-Kozak-2018-best-circle}. This fact makes the analysis of the problem easier. Still, the error function is irrational since part of it is the root of a polynomial function. In the first attempts to find the best approximation, the radial distance was replaced by the corrected polynomial error. The quadratic $G^0$ case was studied by M{\o}rken \cite{Morken-93-Parametric-Parabola}, and the cubic $G^1$ and the quartic $G^2$ cases were considered by Hur and Kim \cite{HurKim-2011}. Later, the problem was also solved with respect to the radial distance and for every inner angle $2\varphi\in (0,\pi]$ of the circular arc (see \cite{Vavpetic-Zagar-21-circle-arcs-Hausdorff}) and for some other cases with more parameters (\cite{Vavpetic-Zagar-2018-optimal-circle-arcs}, \cite{Vavpetic-2020-optimal-circular-arcs}).

Instead of the radial distance, the error function can be used to that measure how the curvature of the approximant matches the curvature of the circular arc. Such an approximation is useful, for example, in cases where the curve is considered as the trajectory of a moving body. The curvature function of the curve itself is more complicated than the distance function, so the analysis of finding the optimal approximant is more difficult and appears less frequently in the literature. Since the curvature of a circular arc is constant, one can determine the minimum value of the integral of the absolute value of the derivative of its curvature to measure the change of curvature of the approximant (see \cite{Jaklic-Zagar-11}). However, more often the error function is taken to be the maximum of the absolute value of the difference between the curvature of the circular arc and of the approximant. Dokken, D{\ae}hlen, Lyche and M{\o}rken \cite{Dokken-Daehlen-Lyche-Morken-90-CAGD} constructed a cubic $G^1$ approximant $\bfm{p}$ of a circular arc with the property that $\|\bfm{p}(\cdot)\|^2-1$ is an equioscillating function; i.e.\ all five local extrema have the same absolute value. In \cite{deBoor-Hoellig-Sabin-87-High-Accuracy} de Boor, H\"olling and Sabin described a parametric cubic spline interpolation scheme for plane curves which works very well for circular arcs. In both papers, the authors also realised that the curvature of the approximant of the circular arc is very close to 1, the curvature of the unit circular arc. The graph of the curvature function of the approximant oscillates five times around the constant 1, which is the same property as for the graph of the radial distance of the approximant. The difference is that the local extrema agree in absolute value in the case of radial distance, whereas in the curvature, they do not. For an optimal approximant, it might be natural to require that the curvature is an equioscillating function. Unfortunately, it turns out that there are no such approximants except for only one inner angle $2\varphi$. Kova\v c and \v Zagar \cite{Kovac-Zagar-curvature16} impose the condition that the curvature of the approximant coincides with the circular arc's curvature at the midpoint and at the edges. They proved that their approximants have an optimal asymptotic approximation order. We show that their construction provides the optimal cubic $G^1$ approximant only for inner angles smaller than some angle, which is calculated in the paper and at which the curvature is an equioscillating function, and that there are better approximations for circular arcs with an inner angle larger than the above one. We also show that something similar holds for the case of the quadratic $G^0$ and the quartic $G^2$ approximants. 

The paper is organized as follows. After the introduction in \Cref{sec:introduction}, some basic preliminaries are explained in \Cref{sec:preliminaries}. In \Cref{sec:G0_quadratic} the quartic $G^0$ case is considered and the main theorem \Cref{mainG0} describes the optimal approximants. In \Cref{sec:G1_cubic} the cubic $G^1$ case is considered. In the beginning, two cases motivate the analysis of the error function, and the main theorem \Cref{mainG1}, which gives the algorithm for finding the optimal approximant, is stated at the end of the section. \Cref{sec:G2_quartic} deals with the quartic $G^2$ case, where the algorithm for finding the optimal approximant is stated in \Cref{mainG2}. Numerical examples are shown at the end of \Cref{sec:G1_cubic} and \Cref{sec:G2_quartic}. Some closing remarks are given in the last section.

\section{Preliminaries}\label{sec:preliminaries}

Let us denote the considered circular arc with the inner angle $2\varphi$ by $\bfm{c}$. More precisely, $\bfm{c}$
will be parametrized as $\bfm{c}\colon[-\varphi,\varphi]\to\RR^2$, $0<\varphi\leq  \tfrac\pi 2$. It is enough to consider the unit circular arc centered at the origin of a particular coordinate system and symmetric with respect to the first coordinate axis. We can assume that $\bfm{c}(\alpha)=(\cos\alpha,\sin\alpha)^T$. Let the polynomial approximation of degree $n$ of $\bfm{c}$ be denoted by $\bfm{p}_n\colon[-1,1]\to\RR^2$, where $\bfm{p}_n=(x_n,y_n)^T$, and $x_n$, $y_n$ are scalar polynomials of degree at most $n$. The Bernstein-B\'ezier representation of $\bfm{p}_n$ will be considered, i.e.\
\begin{equation*}\label{p_Bern_Bez_form}
  \bfm{p}_n(t)=\sum_{j=0}^n B_j^n(t)\,\bfm{b}_j,
\end{equation*}
where $B_j^n$, $j=0,1,\dots,n$, are (reparameterized) Bernstein polynomials over $[-1,1]$, given as
\begin{equation*}
  B_j^n(t)=\binom{n}{j}\left(\frac{1+t}{2}\right)^{j}\left(\frac{1-t}{2}\right)^{n-j},
\end{equation*}
and $\bfm{b}_j\in\RR^2$, $j=0,1,\dots,n$, are the control points. Note that since $\bfm{c}$ is symmetric with respect to the first coordinate axis, the best parametric polynomial interpolant must possess the same symmetry, i.e.\ $\bfm{b}_j=r(\bfm{b}_{n-j})$, $j=0,1,\dots,n$, where $r\colon\RR^2\to\RR^2$ is the reflection over the first coordinate axis.

Circular arc $\bfm{c}$ and the parametric polynomial $\bfm{p}_n$ have the geometric contact of order $k$ at the boundary points $\bfm{c}(\pm \varphi)$, if there exists a smooth regular reparametrisation $\rho\colon [-1,1]\to[-\varphi,\varphi]$ with $\rho'(\pm 1)>0$, such that
\begin{equation*}
    \frac{d^j\bfm{p}_n}{dt^j}(\pm 1)=\frac{d^j(\bfm{c}\circ\rho)}{dt^j}(\pm 1),\quad j=0,1,\ldots,k.
\end{equation*}
In such case, we say that $\bfm{p}_n$ is a $G^k$ approximation of $\bfm{c}$. In particular, the $G^0$ approximation is equivalent to the interpolation of boundary points, the $G^1$ approximation provides that $\bfm{c}$ and $\bfm{p}_n$ share the same tangents at the boundary points and the $G^2$ approximation additionally requires that $\bfm{c}$ and $\bfm{p}_n$ have the same curvature at the boundary points.

Since the curvature of $\bfm{c}$ is identically 1, the appropriate signed curvature error function for $\bfm{p}_n$ is
$$
e_n(t)=1-\frac{\bfm{p}'_n(t)\times \bfm{p}''_n(t)}{\|\bfm{p}'_n(t)\|^3}.
$$
The goal of the approximation is thus to find a parametric polynomial $\bfm{p}_n$ satisfying the required geometric continuity for which the maximum of the absolute value of the function $e_n$ is as small as possible. 
The cubic $G^1$ and the quartic $G^2$ cases are considered in this paper. In both cases, the set of all polynomial approximants is a one-parametric family. Since a free parameter will be related to distance, we later denote it by $d$, and the function $e_n$ becomes a function of parameters $t$ and $d$.

Throughout the paper, we will write some polynomials in the basis $\mathcal{B}_x^n=\{x^i(1-x)^{n-i}\}_{0\le i\le n}$ instead of the standard basis $\mathcal{S}_x^n=\{x^i\}_{0\le i\le n}$. We have
\begin{align*}
p(x)&=\sum_{i=0}^n a_i x^i=\sum_{i=0}^n a_i x^i(x+1-x)^{n-i}=
\sum_{i=0}^n \sum_{j=0}^{n-i} \binom{n-i} j a_i x^{i+j}(1-x)^{n-i-j}\\
&=\sum_{i=0}^n \sum_{j=i}^n \binom{n-i} {j-i} a_i x^j(1-x)^{n-j}
=\sum_{j=0}^n \left(\sum_{i=0}^j \binom{n-i} {j-i} a_i\right) x^j(1-x)^{n-j},
\end{align*}
so the transition matrix is lower triangular, with binomial coefficients for the elements. 
By analogy, the polynomial of two variables can be written in $\mathcal{B}_x^n\times \mathcal{B}_y^m$ basis instead of $\mathcal{S}_x^n\times \mathcal{S}_y^m$ basis, and we have
\begin{align*}
p(x,y)&=\sum_{i=0}^n\sum_{j=0}^m a_{i,j} x^iy^j=
\sum_{k=0}^n\sum_{l=0}^m\left(\sum_{i=0}^k\sum_{j=0}^l \binom{n-i}{k-i}\binom{m-j} {l-j} a_{i,j}\right) x^k(1-x)^{n-k}y^l(1-y)^{m-l}.
\end{align*}
The main reason for the new basis is the following argument, which will be used several times in the paper: if a polynomial $p$ in the basis $\mathcal{B}_x^n\times \mathcal{B}_y^m$ has all coefficients nonnegative, then $p(x,y)\ge0$ provided $0\le x,y\le 1$. We can do the same for polynomials with several variables.

\section{Quadratic $G^0$ approximants}\label{sec:G0_quadratic}

This case may not be interesting in practice, because for trajectory approximations we often prescribe not only the edge points but also the start and the end directions, so we are looking for at least a $G^1$ approximant. Although, the analysis of this case is not difficult we belive that the result is surprising.
Geometrically, we are looking for a parabola that passes through given boundary points of the circular arc with the curvature as close as possible to the curvature of the arc.
If for a fixed $\varphi \in (0,\frac{\pi}{2}]$ we denote $c=\cos\varphi\in [0,1)$, the control points are
$$
\bfm{b}_0=(c,-\sqrt{1-c^2})^T,\bfm{b}_1=(d,0)^T,
  \bfm{b}_2=(c,\sqrt{1-c^2})^T,
$$
where $d>0$. The corresponding signed curvature error function is
$$
e_2(t,d)=1+\frac{\sqrt{1-c^2} (c-d)}{\sqrt{\left(1-c^2+(c-d)^2 t^2\right)^3}}.
$$
Since $\tfrac{de_2}{dt}(t,d)=-3 \sqrt{1-c^2} (c-d)^3 t\left(1-c^2+(c-d)^2 t^2\right)^{-5/2}$, the only candidates for the extrema of $e_2(\cdot,d)$ on [-1,1] are $0$ and $\pm 1$.

If $d<c$, the graph of the function $e_2(\cdot,d)$ is above the line $c=1$, so the optimal parameter must be greater than $c$; in fact the graph of the function $e_2(\cdot,d)$ is the mirror image over horizontal line $c=1$ of the graph of the function $e_2(\cdot,2c-d)$. And for $d>c$ we have $\tfrac{d^2e_2}{dt^2}(0,d)=\tfrac{3(d-c)^3}{(1-c^2)^2}>0$, so $e_2(\cdot,d)$ has minimum at $t=0$ and maximum at $t=\pm 1$.

The only value $d>c$ for which $\tfrac{de_2}{dd}(1,d)=0$ is $d_e(c):=c+\tfrac{\sqrt{2}}2\sqrt{1-c^2}$.
If $|e_2(0,d_e(c))|\le |e_2(1,d_e(c))|$, then $d_e(c)$ is the optimal parameter. As the examples below show, both possibilities exist.

\begin{example}
Let $\varphi_1=\tfrac{\pi}4$, then $c_1=\tfrac{\sqrt 2}2$ and $d_e(c_1)=\tfrac{1+\sqrt 2}2$. Since $e_2(0,d_e(c_1))=0$, $d_e(c_1)$ is the optimal parameter (see the left graph on \Cref{fig:e2examples}).

\begin{figure}[h!]
\begin{minipage}{.5\textwidth}
  \centering
 \includegraphics[width=0.8\textwidth]{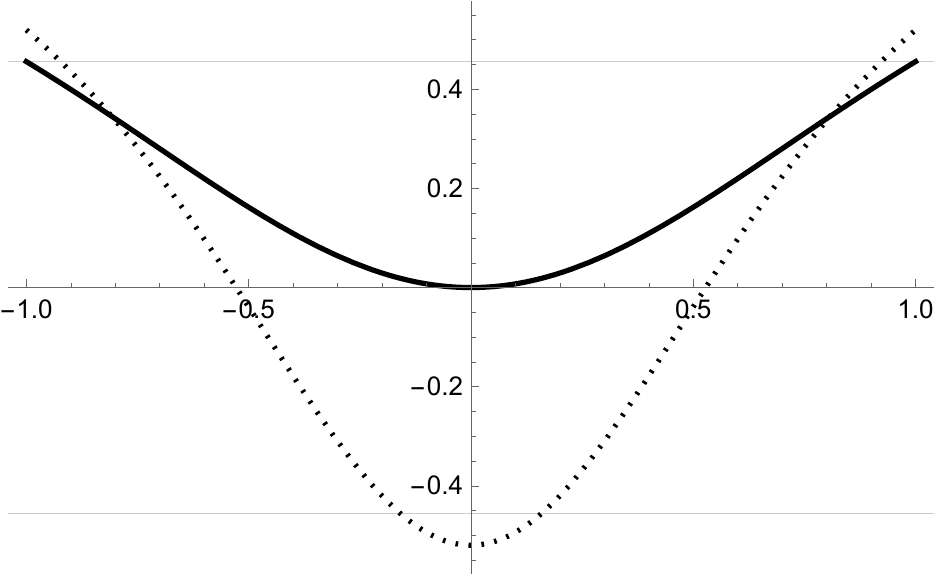}
\end{minipage}
\begin{minipage}{.5\textwidth}
\centering
  \includegraphics[width=0.8\textwidth]{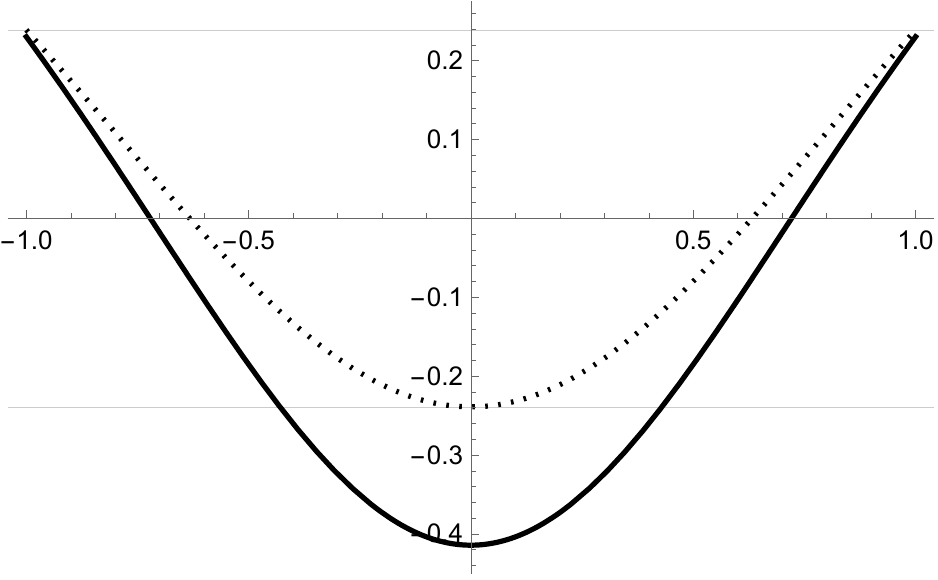}
\end{minipage}
  \caption[]{On the left it is the graph of the error function of the optimal approximant for $c=\tfrac{\sqrt 2} 2$; the solid one is the graph of the error function of the optimal approximant and the dashed one of the approximant that has the property that the error function equioscillates. On the right it is the graph of the error function of the optimal approximant for $c=\frac {\sqrt{3} }{2}$; the solid one is the graph of the error function of the approximant induced by $d_e(c_2)$  and the dashed one of the optimal approximant, for which, in this case, the error function equioscillates.}
  \label{fig:e2examples}
\end{figure}

Let $\varphi_2=\tfrac{\pi}6$, then $c_2=\frac{\sqrt{3} }{2}$ and $d_e(c_2)=\tfrac{\sqrt 2+2\sqrt 3}4$. Since $|e_2(0,d_e(c_2))|=\sqrt 2-1>\tfrac{9-4\sqrt 3}9=e_2(1,d_e(c_2))$, $d_e(c_2)$ is not the optimal parameter. There exists $d<d_e(c_2)$ for which $|e_2(0,d)|=e_2(1,d)$ and induces a better approximant than $d_e(c_2)$ as it is show on the right graph on \Cref{fig:e2examples}.
\end{example}

Let us denote the optimal parameter $d$ by $d^*(c)$.
\begin{theorem}\label{mainG0}
Let $c\in [0,1)$.
\begin{enumerate}
\item If $c\le \tfrac{\sqrt 6}{36} \sqrt{181-12 \sqrt{6}}\approx 0{.}83778$, then $d^*(c)=d_e(c)=c+\tfrac{\sqrt{2}}2\sqrt{1-c^2}$.
\item If $c> \tfrac{\sqrt 6}{36} \sqrt{181-12 \sqrt{6}}$ then $d^*(c)$ is the unique solution of $e_2(1,d)=-e_2(0,d)$ on $[c,d_e(c)]$.
\end{enumerate}
\end{theorem}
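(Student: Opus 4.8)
The plan is to reduce everything to a one-variable analysis in $d$ on the interval $[c,d_e(c)]$, since the preamble has already established all the structural facts we need: for $d>c$ the function $e_2(\cdot,d)$ has its minimum at $t=0$ and maxima at $t=\pm1$, and $d_e(c)$ is the unique stationary point of $d\mapsto e_2(1,d)$ for $d>c$. The sign information is key: one computes directly that $e_2(0,d)=1+\tfrac{c-d}{\sqrt{1-c^2}}$ (which is negative for $d>c+\sqrt{1-c^2}$, but positive near $d=c$) and $e_2(1,d)=1+\tfrac{\sqrt{1-c^2}(c-d)}{\sqrt{(1+(c-d)^2-c^2)^3}}$. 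I would first observe that minimizing $\max\{|e_2(0,d)|,|e_2(1,d)|\}$ over $d\in[c,d_e(c)]$ is the whole problem, because outside this interval the error is strictly larger (for $d<c$ by the mirror-image remark, and for $d>d_e(c)$ because $e_2(1,d)$ starts increasing again past its minimizer).

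Next I would analyze the two candidate optimality conditions. The first candidate is $d=d_e(c)$, which minimizes $|e_2(1,d)|$ alone; this is optimal exactly when $|e_2(0,d_e(c))|\le|e_2(1,d_e(c))|$, matching statement (1). The second candidate is the equioscillation point where $|e_2(0,d)|=|e_2(1,d)|$; since $e_2(0,d)$ and $e_2(1,d)$ have opposite signs on the relevant subinterval (the $t=0$ value sits above $1$ while the $t=\pm1$ value dips below, or vice versa), the balancing equation is $e_2(1,d)=-e_2(0,d)$, exactly as in statement (2). The plan is to show that as $d$ decreases from $d_e(c)$ toward $c$, the quantity $|e_2(1,d)|$ increases monotonically (since we move away from its minimizer) while $|e_2(0,d)|$ decreases, so the two graphs cross exactly once, giving the unique solution claimed in (2).

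To pin down the threshold, I would set up the boundary case where the two regimes meet, namely $|e_2(0,d_e(c))|=|e_2(1,d_e(c))|$ with opposite signs, i.e.\ $e_2(1,d_e(c))=-e_2(0,d_e(c))$. Substituting $d=d_e(c)=c+\tfrac{\sqrt2}2\sqrt{1-c^2}$ collapses both sides to explicit algebraic expressions in $c$ alone: at this $d$ one has $c-d=-\tfrac{\sqrt2}2\sqrt{1-c^2}$, so $e_2(0,d_e(c))=1-\tfrac{\sqrt2}2$ and the denominator in $e_2(1,d_e(c))$ becomes $\left(1-c^2+\tfrac12(1-c^2)\right)^{3/2}=\left(\tfrac32(1-c^2)\right)^{3/2}$. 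The threshold equation then reduces to a polynomial relation in $c^2$, which I would solve to recover the stated value $c=\tfrac{\sqrt6}{36}\sqrt{181-12\sqrt6}$, and then check the inequality direction to confirm that (1) holds below the threshold and (2) above it.

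The main obstacle I anticipate is not the existence/uniqueness argument but establishing the \emph{monotonicity} claims rigorously and verifying that no interior critical point of $d\mapsto\max\{|e_2(0,d)|,|e_2(1,d)|\}$ is missed — in particular confirming that $|e_2(1,d)|$ is genuinely monotone on $[c,d_e(c)]$ rather than having additional turning points, and that the sign of $e_2(0,d)-1$ versus $e_2(1,d)-1$ is as assumed throughout the interval. Because the denominator of $e_2(1,d)$ carries a $3/2$ power, the cleanest route is to clear radicals by squaring the balancing equation $e_2(1,d)=-e_2(0,d)$ into a polynomial in $d$ (and $c$), argue the relevant factor has a unique root in $[c,d_e(c)]$ using a sign-of-coefficients argument in the $\mathcal{B}$-basis introduced in \Cref{sec:preliminaries}, and only at the end translate back to the original irrational expressions.
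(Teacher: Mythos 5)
Your overall strategy coincides with the paper's: restrict attention to $[c,d_e(c)]$, accept $d_e(c)$ when $|e_2(0,d_e(c))|\le e_2(1,d_e(c))$, otherwise balance $e_2(1,d)=-e_2(0,d)$, and obtain the threshold by substituting $d=d_e(c)$. However, there is a concrete computational error that would derail your threshold derivation: $e_2(0,d)=1+\tfrac{c-d}{1-c^2}$, not $1+\tfrac{c-d}{\sqrt{1-c^2}}$, because the denominator of $e_2$ at $t=0$ is $(1-c^2)^{3/2}$ and only one factor of $\sqrt{1-c^2}$ cancels against the numerator. Consequently $e_2(0,d_e(c))=1-\tfrac{\sqrt{2}}{2\sqrt{1-c^2}}$, which depends on $c$, rather than your constant $1-\tfrac{\sqrt{2}}{2}$. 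With your value the equation $e_2(1,d_e(c))+e_2(0,d_e(c))=0$ does not reduce to $c^2=\tfrac{181-12\sqrt{6}}{216}$, so you would not recover the stated threshold; with the corrected value it does, since $e_2(1,d_e(c))=1-\tfrac{2}{3\sqrt{3}\sqrt{1-c^2}}$.

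A second, smaller problem is the uniqueness argument in case (2). The functions $|e_2(0,\cdot)|$ and $|e_2(1,\cdot)|$ are not monotone on all of $[c,d_e(c)]$ in the way you assert: at $d=c$ both equal $1$ (the approximant degenerates and $e_2(\cdot,c)\equiv 1$), $|e_2(0,\cdot)|$ first decreases to $0$ at $d=c+1-c^2$ and then increases, and for $c$ close to $1$ (namely $c^2>\tfrac{23}{27}$) even $e_2(1,d_e(c))<0$, so $|e_2(1,\cdot)|$ also changes direction. So ``the two graphs cross exactly once'' fails for the absolute values (they already meet at $d=c$). Relatedly, discarding $d>d_e(c)$ because $e_2(1,\cdot)$ increases there only works in case (1); in case (2) the binding quantity at the optimum is $|e_2(0,\cdot)|$. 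The clean repair, which is what the paper does, is to work with the signed sum: both $e_2(0,\cdot)$ and $e_2(1,\cdot)$ are strictly decreasing on $(c,d_e(c))$, so $e_2(0,\cdot)+e_2(1,\cdot)$ decreases strictly from $2$ at $d=c$ to a negative value at $d_e(c)$, giving a unique zero $d^*(c)$; optimality then follows because $e_2(1,d)>e_2(1,d^*(c))=\max_t e_2(t,d^*(c))$ for $d<d^*(c)$ and $|e_2(0,d)|>|e_2(0,d^*(c))|$ for $d>d^*(c)$. In particular, no squaring of the balancing equation and no Bernstein-basis positivity argument are needed in this quadratic case.
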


\begin{proof}
\begin{enumerate}
\item If $e_2(1,d_e(c))+e_2(0,d_e(c))\ge 0$, then $d^*(c)=d_e(c)$. The statement follows since the inequality holds for $c\le \tfrac{\sqrt 6}{36} \sqrt{181-12 \sqrt{6}}$.
\item If $c> \tfrac{\sqrt 6}{36} \sqrt{181-12 \sqrt{6}}$ then $e_2(1,d_e(c))+e_2(0,d_e(c))< 0$. For every $d\in (c,d_e(c))$ we have $\tfrac{de_2}{dd}(1,d)=-\left(1-3 c^2+4 c d-2 d^2\right) \sqrt{1-c^2}\left(1-2 c d+d^2\right)^{-5/2}<0$, hence the function $e_2(1,\cdot)$ is strictly decreasing. The function $e_2(0,\cdot)$ is strictly decreasing since $\tfrac{de_2}{dd}(0,d)=-\tfrac1{1-c^2}<0$. Therefore, $e_2(0,\cdot)+e_2(1,\cdot)$ is strictly decreasing on $[c,d_e(c)]$. Since $e_2(0,c)+e_2(1,c)=2>0$ and $e_2(1,d_e(c))+e_2(0,d_e(c))< 0$ there exists the unique parameter $d^*(c)\in [c, d_e(c)]$ such that $e_2(0,d^*(c))+e_2(1,d^*(c))=0$. For $d<d^*(c)$ we have $e_2(1,d)>e_2(1,d^*(c))=\max_{t\in[-1,1]} e_2(t,d^*(c))$ and for $d>d^*(c)$ we have $|e_2(0,d)|>|e_2(0,d^*(c))|=\max_{t\in[-1,1]} e_2(t,d^*(c))$, so $d^*(c)$ is the optimal parameter.\qedhere
\end{enumerate}
\end{proof}
We have seen that although there are approximants for which the error function is equioscillating, in some cases, the error function of the optimal approximant does not have this property. Moreover, the error function can be nonnegative, as shown by the example $\varphi=\tfrac\pi 4$ (see \Cref{fig:e2examples}).

\section{Cubic $G^1$ approximants}\label{sec:G1_cubic}

In this section, we consider the $G^1$ approximation of the circular arc $\bfm{c}$, where $\varphi\in (0,\tfrac\pi 2]$ is fixed. If we again denote $c=\cos\varphi\in [0,1)$, the control points are
\begin{align*}
  \bfm{b}_0&=(c,-\sqrt{1-c^2})^T,
  &\bfm{b}_1=(c,-\sqrt{1-c^2})^T+d\,(1-c^2,c\sqrt{1-c^2})^T,\\
  \bfm{b}_3&=(c,\sqrt{1-c^2})^T,
  &\bfm{b}_2=(c,\sqrt{1-c^2})^T+d\,(1-c^2,-c\sqrt{1-c^2})^T,
\end{align*}
where $d>0$.
% Assuming that the control point $\bfm{b}_2$ has a positive $y$-coordinate, we get the upper boundary $0<d<\tfrac 1 c$ for $c \in (0,1)$.

The corresponding signed curvature error function is
$$
e_3(t,d)=1+\frac{8 d \left(c d-2+(3 c d-2) t^2\right)}{3 \sqrt{\left((2-c d)^2+2 (d (2 d+c (8-5 c d))-4) t^2+(2-3 c d)^2 t^4\right)^3}}=:1+\frac{f(t,d)}{\sqrt{g(t,d)^3}}.
$$
Since $e_3(0,d)=1-\frac{8 d(2-cd)}{3 |2-c d|^3}$ and $e_3(1,d)=1-\frac{4 (1-c d)}{3 d^2}$, the function $e_3(0,\cdot)$ is strictly decreasing and the function $e_3(1,\cdot)$ is strictly increasing for $0<d<\tfrac 2 c$ if $c \in (0,1)$ or for $d>0$ if $c=0$. For $d>\tfrac 2 c$, the curvature at $t=0$ is greater than one, which means that such $d$ does not induce an optimal approximant. Hence we may assume that $d<\tfrac 2 c$ if $c \ne 0$ and in this case $e_3(1,\cdot)$ and $e_3(0,\cdot)$ are strictly monotone rational functions.

The equation $e_3(0,d) = e_3(1,d)$ implies the unique solution
$$
d_e(c) :=\frac{5 c^2+\left(5 \sqrt{3}-\sqrt{27+c^3}\right) \sqrt[3]{\sqrt{27+c^3}+3 \sqrt{3}}-\left(5 \sqrt{3}+\sqrt{27+c^3}\right) \sqrt[3]{\sqrt{27+c^3}-3 \sqrt{3}}}{3 \left(2+c^3\right)}.
$$
In the examples below we show that $e_3(0,d_e(c))$ is equal $\max |e_3(\cdot,d_e(c))|$ for some values $c$ but not for all.
If $e_3(0,d_e(c))=\max |e_3(\cdot,d_e(c))|$, then $d_e(c)=d^*(c)$ and it induces the optimal approximation, since for $d>d_e(c)$ we have $e_3(1,d)>e_3(1,d_e(c))=\max |e_3(\cdot,d_e(c))|$ and for $d<d_e(c)$ we have $e_3(0,d)>e_3(0,d_e(c))=\max |e_3(\cdot,d_e(c))|$. 

\begin{example}\label{example0.5}
Let $c=\tfrac 1 2$, i.e.\ the inner angle of the circular arc is $\tfrac{2\pi}3$. Then 
\begin{align*}
d_e\left(\frac{1}{2}\right)&=\frac{2}{51} \left(5-\frac{383}{\sqrt[3]{12671+408\sqrt{1302}}}+\sqrt[3]{12671+408\sqrt{1302}}\right)\approx 0.8800,\\
e_3\left(0,d_e\left(\frac{1}{2}\right)\right)&=\max \left|e_3\left(\cdot,d_e\left(\frac{1}{2}\right)\right)\right| \\
&=\frac{1}{36} \left(38-\frac{193}{\sqrt[3]{14113-384\sqrt{1302}}}-\sqrt[3]{14113-384 \sqrt{1302}}\right)\approx 0.0358.
\end{align*}
In this case, $d_e\left(\frac{1}{2}\right)$ induces the optimal approximant (see the left graph on \Cref{fig:e3examples}). 
\end{example}

\begin{figure}[h!]
\begin{minipage}{.5\textwidth}
  \centering
 \includegraphics[width=0.9\textwidth]{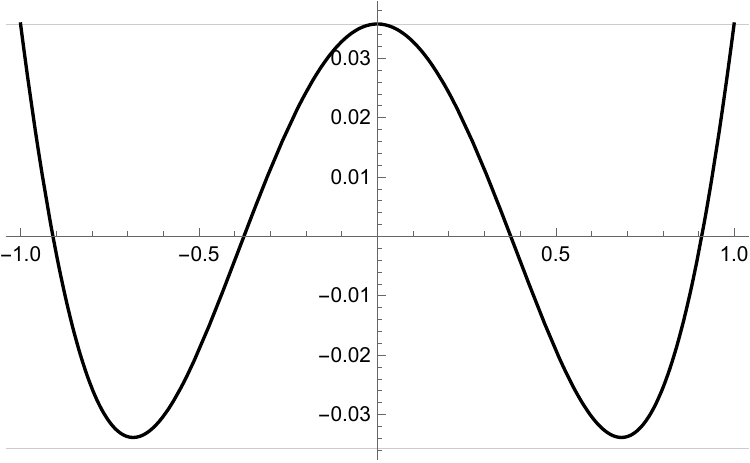}
\end{minipage}
\begin{minipage}{.5\textwidth}
\centering
  \includegraphics[width=0.9\textwidth]{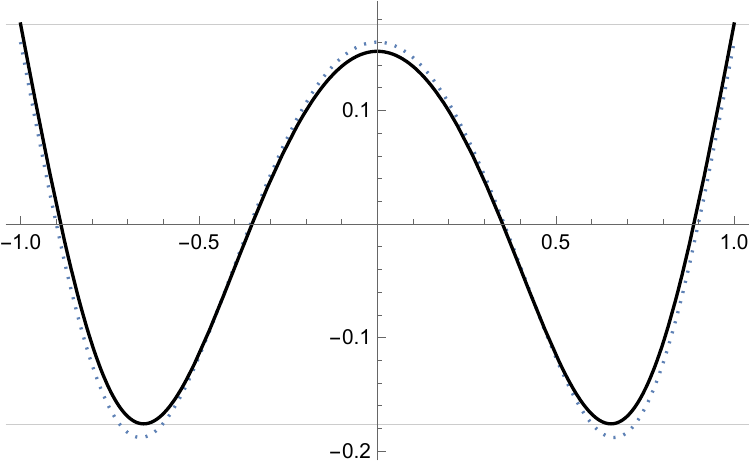}
\end{minipage}
  \caption[]{On the left it is the graph of the error function of the optimal approximant for $c=\tfrac 1 2$. On the right, the solid graph is the graph of the error function of the optimal approximant for $c=0$, and the dotted graph is the graph of the error function where the values at $t=0$ and $t=1$ coincide.}
  \label{fig:e3examples}
\end{figure}

\begin{example}\label{example0}
Let $c=0$, i.e.\ we consider the half circle. Then $d_e(0)=\sqrt[3]{2}\approx 1.2599$. But $e_e(0,d_e(0))=1-\frac{2 \sqrt[3]{2}}{3}\approx0.1601<0.1880\approx |\min e_3(\cdot, d_e(0))|$. Thus, $d_e(0)$ almost certainly does not induce the optimal approximant. It is easy to see that the optimal parameter $d^*(0)$ is on the interval $[\tfrac{2\sqrt{3}}{3},\tfrac{3}{2}]$, since $e_3(1,\tfrac{2\sqrt{3}}{3})=0$ and $e_3(0,\tfrac{3}{2})=0$. For every $d\in [\tfrac{2\sqrt{3}}{3},\tfrac{3}{2}]$ we have 
\begin{align*}
\min e_3(\cdot,d)=1-\frac{32 \sqrt{2} d \left(4-d^2+\sqrt{144-40 d^2+d^4}\right)}{9\sqrt{3 \left(4-d^2\right)^3 
 \left(12+d^2-\sqrt{144-40 d^2+d^4}\right)^3}}
\end{align*}
which is an increasing function on $(0,\tfrac 3 2)$, since it is easy to observe that its derivative is positive. Hence there exists a unique $d^*(0)\in [\tfrac{2\sqrt{3}}{3},\tfrac{3}{2}]$ such that $e_3(1,d^*(0))=-\min e_3(\cdot,d^*(0))$. More precisely, $d^*(0)\approx 1.2721$ induces the optimal approximant (see the right graph on \Cref{fig:e3examples}).
\end{example}

\begin{figure}[h!]
\begin{minipage}{.5\textwidth}
  \centering
 \includegraphics[width=0.9\textwidth]{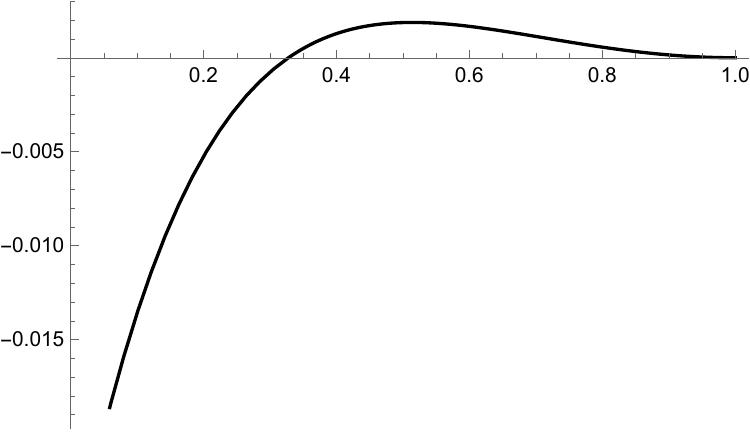}
\end{minipage}
\begin{minipage}{.5\textwidth}
\centering
  \includegraphics[width=0.9\textwidth]{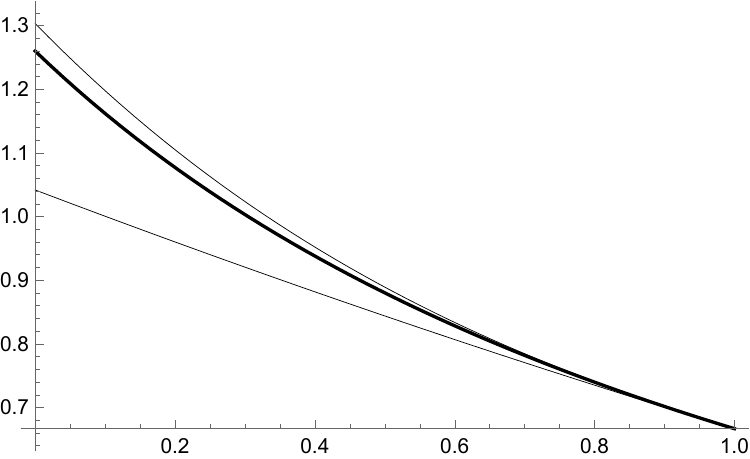}
\end{minipage}
  \caption[]{On the left it is the graph of the function $e_3(0,d_e(c))+\min e_3(\cdot,d_e(c))=\max e_3(\cdot,d_e(c))+\min e_3(\cdot,d_e(c))$. On the right it is a graph of function $d_e$ with its upper and lower bounds $d_1$ and $d_2$.}
  \label{fig:e3bounds}
\end{figure}

The last example shows that for some values of $c$, it is necessary to analyze the behaviour of the minimum of the function $e_3(\cdot,d)$. In principle, it is possible to explicitly calculate the function $e_3(0,d_e(c))+\min e_3(\cdot,d_e(c))$, but its expression is very long. From its graph (see left graph on \Cref{fig:e3bounds}), it seems that only for $c\ge 0.32792$, i.e.\ for a circular arc with the inner angle $2\varphi\le 0.78731\pi$, the method $3$ from \cite{Kovac-Zagar-curvature16} produces the optimal approximant. However, the function is too complicated to determine its zero analytically.

Since it is difficult to determine the minimum value of the error function as it depends on $c$ and $d$, for each $c$ we will find (as small as possible) interval $I_c$ such that the optimal parameter $d^*(c)\in I_c$. For all parameters $d\in I_c$ we will determine the necessary properties of the function $e_3(\cdot,d)$ to be able to determine the optimal parameter. Since we want the interval $I_c$ to be small, let's look at the power series expansion for $d_e$ around the point $c = 1$ and let us define
\begin{align*}
d_1(c)&:=\frac{2}{3}+\frac{1}{3}(1-c)+\frac{1}{24} (1-c)^2,\\
d_2(c)&:=\frac{2}{3}+\frac{1}{3}(1-c)+\frac{1}{6} (1-c)^2+\frac{101}{1152}(1-c)^3+\frac{25}{512} (1-c)^4.
\end{align*}
\begin{figure}[ht]
\begin{minipage}{.5\textwidth}
  \centering
 \includegraphics[width=0.9\textwidth]{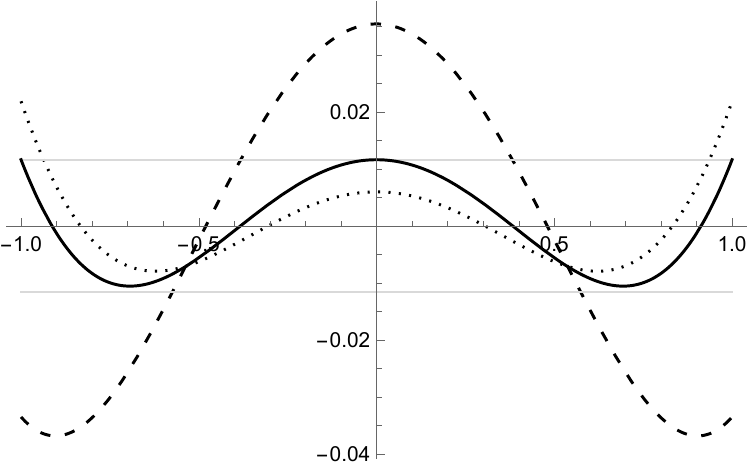}
\end{minipage}
\begin{minipage}{.5\textwidth}
\centering
  \includegraphics[width=0.9\textwidth]{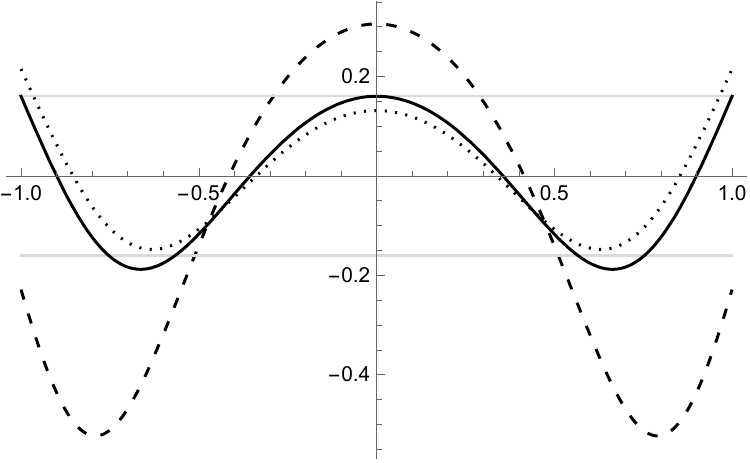}
\end{minipage}
  \caption[]{Graphs of $e_3 (\cdot,d_e(c))$ (solid), $e_3(\cdot,d_1(c))$ (dashed) and $e_3(\cdot,d_2(c))$ (dotted) for $\varphi = \tfrac \pi 4$ and $\varphi = \tfrac \pi 2$.}

  \label{fig:e3kot}
\end{figure}
% Še enkrat naredi lepše slike!

The graphs of functions $d_e$, $d_1$ and $d_2$ are shown in the right graph of \Cref{fig:e3bounds} and two examples of the graphs of the curvature function of parametric polynomial curves induced by $d_e(c)$, $d_1(c)$ and $d_2(c)$ are on \Cref{fig:e3kot}.
Before we show that $d^*(c)\in [d_1(c),d_2(c)]=I_c$ (\Cref{bounds_for_de}), we need the following technical lemma. The main point of the lemma is to show that for every $d\in I_c$, the graph of the function $e_3(\cdot,d)$ looks like the graph of an even polynomial of degree four with a positive leading coefficient (see \Cref{fig:e3kot}), and that the minimum of the function $e_3(\cdot,d)$ is a strictly increasing function of the parameter $d\in I_c$. Note that the last property is the one we needed in \Cref{example0}.

\begin{lemma} \label{properties_of_e3}
Let $c\in (0,1)$.
\begin{enumerate}
\item We have $d_2(c) < \tfrac{2}{3c}$. 
\item If $d\in(0,\tfrac{2}{3c})$ the function $e_3(\cdot,d)$ has at most three local extrema.
\item For all $d\in I_c$, the function $e_3(\cdot,d)$ has the local maximum at $t=0$ and the minimum at a point on the interval $(\tfrac 3 5,1)$.
\item For $t\in [\tfrac 3 5,1)$ the function $e_3(t,\cdot)$ is strictly increasing on $I_c$.
\end{enumerate}
\end{lemma}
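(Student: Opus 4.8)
The plan is to reduce all four claims to sign statements about two polynomials. Since $g$ is a positive multiple of $\|\bfm{p}_3'\|^2$ and the approximant is regular, $g>0$ on $[-1,1]$ (equivalently, positivity of the quartic $g$ can be checked by the nonnegative-coefficient criterion of \Cref{sec:preliminaries}); hence the sign of $\partial_t e_3=\frac{2f_t g-3fg_t}{2g^{5/2}}$ equals the sign of $N:=2f_tg-3fg_t$, and likewise the sign of $\partial_d e_3$ equals that of $M:=2f_dg-3fg_d$. As $f$ and $g$ are even in $t$, the polynomial $N$ is odd and $M$ is even; writing $s=t^2$ we have $N=t\,h(s)$ with $h$ a quadratic in $s$, while $M=M(s,d)$. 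Parts (2)--(3) then concern the positive roots of $h$, and part (4) the sign of $M$.

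For part (1) I would substitute $u=1-c\in(0,1)$ and verify that $2-3c\,d_2(c)$ is positive on $(0,1)$; after clearing denominators this is a one-variable polynomial inequality, provable by exhibiting nonnegative coefficients in the basis $\mathcal{B}_u^{\,k}$. For part (2), the leading coefficient of $h$ is $\tfrac{64}{3}d(2-3cd)^3$, which is positive precisely when $d\in(0,\tfrac{2}{3c})$, so $h$ opens upward; the critical points of $e_3(\cdot,d)$ are $t=0$ together with $\pm\sqrt{s}$ for each positive root $s$ of $h$, so it suffices to show $h$ has at most one positive root. By Descartes' rule this can fail only if $h(0)>0$, the linear coefficient of $h$ is negative, and its discriminant is nonnegative, all simultaneously; excluding this configuration uniformly over $\{c\in(0,1),\,0<d<\tfrac{2}{3c}\}$ is the main computational step, handled by a sign analysis of the coefficients and the discriminant of $h$. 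At most one positive root gives at most three critical points, hence at most three local extrema.

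For part (3), note that $I_c=[d_1(c),d_2(c)]\subset(0,\tfrac{2}{3c})$ by part (1) and $d_1(c)>0$, so part (2) applies on $I_c$. I would then establish the three inequalities $h(0)<0$, $h(\tfrac{9}{25})<0$ and $h(1)>0$, valid for all $c\in(0,1)$ and $d_1(c)\le d\le d_2(c)$. From $h(0)<0$ and the upward opening, $h$ has exactly one positive root $s_1$, with $N<0$ on $(0,\sqrt{s_1})$ and $N>0$ on $(\sqrt{s_1},1)$; hence $t=0$ is a local maximum and $\pm\sqrt{s_1}$ are minima. From $h(\tfrac{9}{25})<0<h(1)$ we get $s_1\in(\tfrac{9}{25},1)$, i.e.\ the minimiser lies in $(\tfrac{3}{5},1)$. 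Each inequality is polynomial in $(c,d)$, and I would verify it by writing $d=d_1(c)+\lambda\,(d_2(c)-d_1(c))$ with $\lambda\in[0,1]$, reparametrising $c$ by $u=1-c$, and checking that the resulting polynomial in $(u,\lambda)$ has nonnegative coefficients in $\mathcal{B}_u^{\,k}\times\mathcal{B}_\lambda^{\,l}$.

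For part (4), $M$ is even in $t$ and hence a polynomial $M(s,d)$ in $s=t^2$; I would show $M(s,d)>0$ for $s\in[\tfrac{9}{25},1)$ and $d\in I_c$ by the same nonnegative-coefficient argument, now after mapping $(u,\lambda,s)$ onto the unit cube (e.g.\ $s=\tfrac{9}{25}+\tfrac{16}{25}\sigma$ with $\sigma\in[0,1]$). This yields $\partial_d e_3(t,\cdot)>0$ on $[\tfrac{3}{5},1)$, hence strict monotonicity; moreover, since the minimiser $t_1(d)\in(\tfrac{3}{5},1)$ from part (3) satisfies $\partial_t e_3(t_1(d),d)=0$, the chain rule gives $\tfrac{d}{dd}\min e_3(\cdot,d)=\partial_d e_3(t_1(d),d)>0$, which is the monotonicity of the minimum needed in \Cref{example0}. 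The main obstacle throughout is the multivariate polynomial positivity over a region whose boundary is given by the quartics $d_1,d_2$ in $1-c$: choosing affine reparametrisations that render all $\mathcal{B}$-basis coefficients nonnegative, and in particular ruling out a second positive root of $h$ uniformly in the two-parameter range of part (2), are the delicate points.
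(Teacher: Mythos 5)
Your proposal is correct and follows essentially the same route as the paper: reduce everything to sign conditions on the polynomials $2f_tg-3fg_t$ and $2f_dg-3fg_d$, exploit evenness in $t$ to get a quadratic in $s=t^2$, and certify the required polynomial inequalities by nonnegative coefficients in affinely reparametrised $\mathcal{B}$-bases over $c$, $\delta$ (your $\lambda$) and $\tau$. The one step you leave open in part (2) -- excluding a second positive root of $h$ uniformly in $(c,d)$ -- is resolved in the paper exactly as your Descartes analysis suggests: the linear coefficient of the quadratic in $s$ is a positive multiple of $4-8cd+d^2+2c^2d^2=(2c^2+1)\bigl(d-\tfrac{4c}{2c^2+1}\bigr)^2+\tfrac{4-8c^2}{2c^2+1}$, which is shown positive on $\bigl(0,\tfrac{2}{3c}\bigr)$ by a short case split on $c\lessgtr\tfrac12$, so the "two positive roots" configuration never occurs.
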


\begin{proof}
\begin{enumerate}
\item The function $h$ defined by $h(c) = \tfrac{2}{3c}- d_2(c)$ is strictly decreasing since all coefficients of $c^2h'(c)$ are negative in the basis $\mathcal{B}_c^5$. Since $h(1) = 0$, the function $h$ is positive on $(0,1)$ and therefore $d_2(c) < \tfrac{2}{3c}$ provided $c\in (0,1)$.

\item The equation $\frac{de_3}{d t}(t,d)=0$ has at most five real solutions. One of them is $t=0$. We only need to prove that the equation has at most one solution on the interval $(0,1]$, due to $e_3(t,d)=e_3(-t,d)$.
The only candidates for positive solutions of the equation are
\begin{align*}
t_{1,2} & = \frac{\sqrt{-h(d)\pm\sqrt{112 c^4 d^4-512 c^3 d^3-32 c^2 d^4+832 c^2 d^2+80 c d^3-576 c d+d^4-40 d^2+144}}}{\sqrt{2}(2-3 c d)}, 
\end{align*}
where 
$$
h(d) = 4-8 c d+d^2+2 c^2 d^2=\left(2 c^2+1\right) \left(d-\frac{4 c}{2 c^2+1}\right)^2+\frac{4-8 c^2}{2 c^2+1}.
$$
It suffices to show that $h(d)>0$ for $d\in(0,\tfrac{2}{3c})$, because then the solution $t_2$ cannot be real. If $c\in (0, \tfrac{1}{2})$, then $h$ has the minimum at $d=\frac{4 c}{2 c^2+1}$ and $h\left(\frac{4 c}{2 c^2+1} \right)=\frac{4-8 c^2}{2 c^2+1}>0$. If $c\in [\tfrac{1}{2}, 1)$ then $h$ is strictly decreasing, since $h'(d)=-\left(2+4 c^2\right) \left(\frac{2}{3 c}-d\right)-\frac{4}{3 c} \left(4 c^2-1\right)<0$, and $h(\tfrac{2}{3c})=\tfrac{4}{9c^2}(1-c^2) \ge 0$. Hence, $h(d)>0$, so the solution $t_1$ is the only candidate for a local extremum of the function $e_3(\cdot,d)$ on the interval $(0,\infty)$, which proves the statement.

\item By the previous statement, it is enough to prove that $\frac{de_3}{d t}\left(\tfrac 3 5,d\right)<0$ and $\frac{de_3}{d t}(1,d)>0$ for all $d\in I_c$. 
We have
\begin{align*}
\frac{de_3}{d t} \left(\tfrac 3 5,d\right) =\frac{-6250 d \cdot h_1(d)}{\left(-224 c^2 d^2+32 c d+225 d^2+256\right)^{5/2}} \, \,\text{ and } \, \,
\frac{de_3}{d t}(1,d) =\frac{h_2(d)}{3d^3},
\end{align*}
where 
$h_1(d) =  -1044 c^3 d^3+5288 c^2 d^2+1275 c d^3-7892 c d-2100 d^2+3776$ and
$h_2(d)= 8d - 24c + 48 c^2 d - 6 c d^2 - 24 c^3 d^2$.
Since $h_1((1-\delta)\cdot d_1(c)+ \delta \cdot d_2(c))$ and $h_2((1-\delta)\cdot d_1(c)+ \delta \cdot d_2(c))$ have all coefficients positive in bases $\mathcal{B}_c^{15}\times \mathcal{B}_\delta^3$ and $\mathcal{B}_c^{11}\times \mathcal{B}_\delta^2$, respectively, we conclude that $\frac{de_3}{dt}\left(\tfrac 3 5,d \right) < 0$  and $\frac{de_3}{dt}(1,d) > 0$ for $d\in I_c$. So the local maximum is at $t=0$ and the minimum of $e_3(\cdot,d)$ is on the interval $(\tfrac 3 5,1)$. 

\item Recall that $e_3(t,d)=1+\tfrac{f(t,d)}{\sqrt{g(t,d)^3}}$. Hence 
$$
\frac{de_3}{dd}(t,d)= \frac{\frac{df}{dd}(t,d)g(t,d)-\tfrac 3 2 f(t,d)\frac{dg}{dd}(t,d)}{\sqrt{g(t,d)^5}}=:\frac{h(t,d)}{\sqrt{g(t,d)^5}}.
$$
Since $h(\frac{3+2\tau}{5},(1-\delta)\cdot d_1(c)+ \delta \cdot d_2(c))$ has all coefficients positive in the basis $\mathcal{B}_c^{15}\times\mathcal{B}_\tau^{6}\times\mathcal{B}_\delta^{3}$, the derivative of $e_3(t,\cdot)$ is positive on $I_c$.
\qedhere\end{enumerate}
\end{proof}

Now we can prove that $d_1(c)$ and $d_2(c)$ are the bounds for the optimal parameter $d^*(c)$.

\begin{lemma} \label{bounds_for_de}
For every $c \in (0,1)$ we have $d_e(c),d^*(c)\in I_c$.
\end{lemma}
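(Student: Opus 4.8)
My plan is to localise both parameters by controlling the error function only at the two endpoints $d_1(c)$ and $d_2(c)$ of $I_c$, where \Cref{properties_of_e3} already describes the shape of $e_3(\cdot,d)$. Fix $c$ and set $M(d):=\max_{t\in[-1,1]}|e_3(t,d)|$ and $m(d):=\min_{t\in[-1,1]}e_3(t,d)$. For $d\in I_c$, parts (2)--(3) of \Cref{properties_of_e3} say that $e_3(\cdot,d)$ has a local maximum at $t=0$, a single interior minimum $m(d)$ in $(\tfrac35,1)$, and rises again afterwards, so that $\max_t e_3(t,d)=\max\{e_3(0,d),e_3(1,d)\}$ and therefore $M(d)=\max\{e_3(0,d),\,e_3(1,d),\,-m(d)\}$. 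Using that $e_3(0,\cdot)$ is strictly decreasing and $e_3(1,\cdot)$ strictly increasing on $(0,\tfrac2c)$ (recorded before \Cref{example0.5}), together with the trivial bounds $M(d)\ge e_3(0,d)$ and $M(d)\ge e_3(1,d)$ valid for every admissible $d$, it suffices to establish the two endpoint identities $M(d_1(c))=e_3(0,d_1(c))$ and $M(d_2(c))=e_3(1,d_2(c))$. Indeed these give $M(d)\ge e_3(0,d)>e_3(0,d_1)=M(d_1)$ for $d<d_1$ and $M(d)\ge e_3(1,d)>e_3(1,d_2)=M(d_2)$ for $d>d_2$, forcing the minimiser $d^*(c)$ into $I_c$.

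By the description of $M$ above, the two endpoint identities reduce to four inequalities. The two ``horizontal'' ones, $e_3(1,d_1)\le e_3(0,d_1)$ and $e_3(0,d_2)\le e_3(1,d_2)$, simultaneously pin down $d_e(c)$: a direct computation yields $e_3(0,d)-e_3(1,d)=-\tfrac43\,P(d)\,d^{-2}(2-cd)^{-2}$, where $P(d):=(2+c^3)d^3-5c^2d^2+8cd-4$ is exactly the cubic whose root is $d_e(c)$. Since $e_3(0,\cdot)-e_3(1,\cdot)$ is strictly decreasing with unique zero $d_e(c)$, the two horizontal inequalities are equivalent to $P(d_1(c))\le0\le P(d_2(c))$, which already gives $d_e(c)\in I_c$. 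After inserting the polynomials $d_1(c),d_2(c)$ these become univariate polynomial inequalities in $c$, which I would certify by the sign argument of \Cref{sec:preliminaries}: show that $-P(d_1(c))$ and $P(d_2(c))$ have only nonnegative coefficients in the basis $\mathcal{B}_c$.

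The two ``vertical'' inequalities, $-m(d_1)\le e_3(0,d_1)$ and $-m(d_2)\le e_3(1,d_2)$, are where the interior minimum enters. Because $m(d)=\min_t e_3(t,d)$, each is equivalent to a pointwise statement: $e_3(0,d_1)+e_3(t,d_1)\ge0$ and $e_3(1,d_2)+e_3(t,d_2)\ge0$ for all $t\in[-1,1]$. To handle the single square root in $e_3(t,d)=1+f(t,d)\,g(t,d)^{-3/2}$, I note that on $I_c$ one has $cd\le c\,d_2(c)<\tfrac23$ by \Cref{properties_of_e3}(1), which makes $f(t,d)<0$ for all $t$; each inequality then reads $A+f\,g^{-3/2}\ge0$ with $A$ rational in $c$ alone ($A=1+e_3(0,d_1)$, respectively $A=1+e_3(1,d_2)$). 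After checking the short scalar inequality $A>0$, I can square to the equivalent polynomial inequality $A^2g^3-f^2\ge0$ in $c$ and $t^2$, and certify it by nonnegativity of all coefficients in $\mathcal{B}_c\times\mathcal{B}_{t^2}$; if necessary I would first restrict to the region carrying the minimum via the substitution $t=\tfrac{3+2\tau}{5}$ used in \Cref{properties_of_e3}, so that $\tau$ ranges over $[0,1]$.

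The conceptual part of the argument is light: everything rests on the shape of $e_3(\cdot,d)$ for $d\in I_c$ from \Cref{properties_of_e3} and on the monotonicity of the two boundary values. The real obstacle is computational, concentrated in the two squared inequalities $A^2g^3-f^2\ge0$: since $g^3$ has degree $12$ in $t$ while $d_1,d_2$ are polynomials of degree $2$ and $4$ in $1-c$, these certificates are large, and the decisive point is whether their coefficients are genuinely nonnegative in the chosen $\mathcal{B}$-basis. I expect that the plain expansion in $\mathcal{B}_c\times\mathcal{B}_{t^2}$ may fail the sign test near the minimiser, and that the cure is exactly the localisation $t=\tfrac{3+2\tau}{5}$ together with the tight choice of $d_1,d_2$ as truncated series of $d_e$ about $c=1$, which is presumably why those particular bounds were selected.
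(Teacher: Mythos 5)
Your architecture (localise $d^*$ by endpoint comparisons and certify the resulting inequalities by positive coefficients in the $\mathcal{B}$-bases) matches the paper for the claim $d_e(c)\in I_c$ and for the upper bound $d^*(c)\le d_2(c)$: the paper indeed proves $e_3(1,d_2(c))+e_3(t,d_2(c))\ge 0$ by squaring exactly as you propose. The gap is in the lower bound. Your argument hinges on the endpoint identity $M(d_1(c))=e_3(0,d_1(c))$, i.e.\ on the ``vertical'' inequality $-m(d_1(c))\le e_3(0,d_1(c))$, and this is false for small $c$. At $c=0$ one has $d_1(0)=\tfrac{25}{24}$ and $e_3(0,d_1(0))=1-\tfrac{2}{3}d_1(0)=\tfrac{11}{36}\approx 0.306$, while the interior minimum of $e_3(\cdot,d_1(0))$ sits near $t\approx 0.79$ with value $\approx -0.523$; by continuity the same failure occurs for all sufficiently small $c>0$. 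This is consistent with \Cref{example0} and the left graph of \Cref{fig:e3bounds}: already at the larger parameter $d_e(c)$ the interior minimum dominates $|e_3(0,\cdot)|$ for roughly $c<0.328$, and at $d_1(c)<d_e(c)$ the minimum is deeper still (by \Cref{properties_of_e3}(4)). Consequently the certificate $A^2g^3-f^2\ge0$ with $A=1+e_3(0,d_1(c))$ is not a computation to be rescued by the substitution $t=\tfrac{3+2\tau}{5}$ or by tighter bases --- the quantity is genuinely negative there --- and the inference ``$M(d)\ge e_3(0,d)>e_3(0,d_1)=M(d_1)$ for $d<d_1$'' collapses because $M(d_1)\ne e_3(0,d_1)$.

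The paper sidesteps this by never comparing parameters $d<d_1(c)$ against $d_1(c)$ itself; it compares them against $d_2(c)$. Having established $\max|e_3(\cdot,d_2(c))|=e_3(1,d_2(c))$, it certifies the single one-variable inequality $e_3(1,d_1(c))+e_3(1,d_2(c))<0$, so that for $d<d_1(c)$ the monotonicity of $e_3(1,\cdot)$ gives $\max|e_3(\cdot,d)|\ge|e_3(1,d)|>|e_3(1,d_1(c))|>e_3(1,d_2(c))=\max|e_3(\cdot,d_2(c))|$: every such $d$ is beaten by $d_2(c)$, hence $d^*(c)\ge d_1(c)$. If you want to retain your framework, replace the false identity at $d_1$ by a comparison of this type (the paper's $|e_3(1,d_1(c))|>e_3(1,d_2(c))$, or alternatively $e_3(0,d_1(c))>e_3(1,d_2(c))$); only one ``vertical'' inequality, the one at $d_2(c)$, is actually needed. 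The rest of your proposal --- the reduction of the horizontal inequalities to the cubic $P(d)=(2+c^3)d^3-5c^2d^2+8cd-4$ and the squaring trick justified by $f<0$ on $I_c$ --- is sound and equivalent to what the paper does.
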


\begin{proof}
First, we show that $d_e(c)\in I_c$. Since $e_3(0,\cdot)$ is strictly decreasing, $e_3(1,\cdot)$ is strictly increasing and $e_3(0,d_e(c))=e_3(1,d_e(c))$ it is enough to show that $e_3(0,d_1(c))>e_3(1,d_1(c))$ and $e_3(0,d_2(c))<e_3(1,d_2(c))$ for all $c\in (0,1)$. Since
\[
\tfrac{(5-c)^4 \left(48-25 c+10c^2-c^3\right)^2}{32 (1-c)^2}\left(e_3(0,d_1(c))-e_3(1,d_1(c))\right)
\]
has all coefficients positive in the basis $\mathcal{B}_c^7$, we get that $d_1(c)$ is the lower bound for $d_e(c)$. And since
\[
\tfrac{\left(6005-5184 c+3330 c^2-1304 c^3+225 c^4\right)^2 \left(9216-6005 c+5184 c^2-3330 c^3+1304 c^4-225 c^5\right)^2}{6144 (1-c)^2}\left(e_3(1,d_2(c))-e_3(0,d_2(c))\right)
\]
has all coefficients positive in the basis $\mathcal{B}_c^{13}$, we get that $d_2(c)$ is the upper bound for $d_e(c)$.

Let us show that $d^*(c)\in I_c$. Above we have proved that $e_3(1,d_2(c))=\max e_3(\cdot,d_2(c))$. Let us show that $e_3(1,d_2(c))=\max |e_3(\cdot,d_2(c))|$. It suffices to prove that $e_3(1,d_2(c))+e_3(t,d_2(c))\ge 0$ for all $t\in [-1,1]$.
Recall that $e_3(t,d_2(c))=1+\tfrac{f(t,d_2(c))}{\sqrt{g(t,d_2(c))^3}}$. Since $e_3(1,d_2(c))>0$ and $f(t,d_2(c))<0$, we can square both sides of the inequality 
$e_3(1,d_2(c))+1 \ge - f(t,d_2(c))g(t,d_2(c))^{-3/2}$.
Hence, the inequality follows because 
\[ \left(6005-5184 c+3330 c^2-1304 c^3+225 c^4\right)^4 \left( (e_3(1,d_2(c))+1)^2 g(t,d_2(c))^3-f(t,d_2(c))^2 \right) \]
has all coefficients nonnegative in the basis $\mathcal{B}_c^{46}\times\mathcal{B}_{t}^{12}$. Since $e_3(1,\cdot)$ is strictly increasing, for $d>d_2(c)$, we have $\max |e_3(\cdot,d)|\ge e_3(1,d)>e_3(1,d_2(c))=\max e_3|(\cdot,d_2(c))|$, so $d_2(c)$ is the upper bound for $d^*(c)$.

Since 
$$
\tfrac{(5-c)^4 \left(6005-5184 c+3330 c^2-1304 c^3+225 c^4\right)^2}{2 (1-c)^2}(e_3(1,d_1(c))+e_3(1,d_2(c))),
$$
has all coefficients negative in the basis $\mathcal{B}_c^{10}$ and $e_3(1,\cdot)$ is strictly increasing, for $d<d_1(c)$ we have  $\max |e_3(\cdot,d)|\ge |e_3(1,d)|>|e_3(1,d_1(c))|>e_3(1,d_2(c))=\max|e_3(\cdot,d_2(c))|$. Therefore, $d^*(c)\ge d_1(c)$.
\end{proof}

By \Cref{properties_of_e3} for every $d\in I_c$ the function $e_3(\cdot,d)$ has the minimum at $t_1(d)\in (\tfrac 3 5,1)$ and the maximum at 0 or 1. The following theorem describes how to find the optimal parameter $d^*(c)$.

\begin{theorem}\label{mainG1}
Let $c\in [0,1)$.
\begin{enumerate}
\item If $e_3(1,d_e(c))\ge |e_3(t_1(d_e(c)),d_e(c))|$, then $d^*(c)=d_e(c)$.
\item If $e_3(1,d_e(c))< |e_3(t_1(d_e(c)),d_e(c))|$, then there exists a unique $d>d_e(c)$ such that $|e_3(t_1(d),d)|=e_3(1,d)$. Then $d^*(c)=d$.
\end{enumerate}
\end{theorem}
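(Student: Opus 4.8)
The plan is to reduce the optimisation to the interval $I_c$, which is legitimate because \Cref{bounds_for_de} guarantees $d^*(c)\in I_c$, and then to run a one-dimensional monotonicity-plus-intermediate-value argument in the single variable $d$. Throughout I would write $M(d):=\max_{t\in[-1,1]}e_3(t,d)$ and $m(d):=\min_{t\in[-1,1]}e_3(t,d)$, so that $\max_t|e_3(t,d)|=\max\{M(d),|m(d)|\}$. By \Cref{properties_of_e3}(3) every $d\in I_c$ produces the same qualitative picture: $e_3(\cdot,d)$ has an interior local maximum at $t=0$ and its global minimum at $t_1(d)\in(\tfrac35,1)$, whence $m(d)=e_3(t_1(d),d)$ and $M(d)=\max\{e_3(0,d),e_3(1,d)\}$. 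The two competing quantities are therefore the higher of the two peaks $e_3(0,d),\,e_3(1,d)$ and the depth $|m(d)|$ of the valley.

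Next I would record the monotonicity inputs. From the discussion preceding \Cref{example0.5}, $e_3(0,\cdot)$ is strictly decreasing, $e_3(1,\cdot)$ is strictly increasing, and they agree exactly at $d_e(c)$; hence $e_3(0,d)>e_3(1,d)$ for $d<d_e(c)$ and the reverse for $d>d_e(c)$, so $M$ is strictly decreasing on $I_c\cap(0,d_e(c)]$, strictly increasing on $I_c\cap[d_e(c),\infty)$, with a strict minimum at $d_e(c)$. The crucial new input is that $m$ is \emph{strictly increasing} on $I_c$: for $d'<d''$ in $I_c$, since $t_1(d'')\in(\tfrac35,1)$ and $e_3(t_1(d''),\cdot)$ is strictly increasing there by \Cref{properties_of_e3}(4),
$$
m(d'')=e_3(t_1(d''),d'')>e_3(t_1(d''),d')\ge m(d').
$$
This chaining sidesteps differentiating through the moving minimiser $t_1(d)$.

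For part (1), the hypothesis $e_3(1,d_e(c))\ge|m(d_e(c))|$ together with $e_3(0,d_e(c))=e_3(1,d_e(c))$ gives $\max_t|e_3(t,d_e(c))|=M(d_e(c))=e_3(1,d_e(c))$. For any other $d\in I_c$ the strict $V$-shape of $M$ yields $M(d)>M(d_e(c))$, so $\max_t|e_3(t,d)|\ge M(d)>\max_t|e_3(t,d_e(c))|$; combined with $d^*(c)\in I_c$ this forces $d^*(c)=d_e(c)$. For part (2), the hypothesis $e_3(1,d_e(c))<|m(d_e(c))|$ first forces $m(d_e(c))<0$, for otherwise $|m(d_e(c))|=m(d_e(c))\le M(d_e(c))=e_3(1,d_e(c))$. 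I then introduce $\psi(d):=e_3(1,d)+m(d)$, a sum of two strictly increasing functions and hence strictly increasing on $I_c$, with $\psi(d_e(c))=e_3(1,d_e(c))-|m(d_e(c))|<0$. At the right end, \Cref{bounds_for_de} established $e_3(1,d_2(c))=\max_t|e_3(t,d_2(c))|\ge-m(d_2(c))$, i.e.\ $\psi(d_2(c))\ge0$, so the intermediate value theorem yields a unique $d\in(d_e(c),d_2(c)]$ with $\psi(d)=0$. At this $d$ the function cannot be constant, so $M(d)=e_3(1,d)>m(d)=-e_3(1,d)$ forces $e_3(1,d)>0$ and $m(d)<0$; the relation then reads $e_3(1,d)=-m(d)=|m(d)|=|e_3(t_1(d),d)|$, exactly the equation in the statement. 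Optimality follows since $\max_t|e_3(t,d)|=e_3(1,d)$, while for larger parameters $\max_t|e_3|\ge e_3(1,\cdot)$ is strictly larger and for smaller parameters $\max_t|e_3|\ge|m(\cdot)|$ is strictly larger (using that $m$ is strictly increasing and negative). Hence $d^*(c)=d$.

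The heavy polynomial-positivity certificates are all discharged inside \Cref{properties_of_e3} and \Cref{bounds_for_de}, so at the level of this theorem no grinding remains; the only genuinely delicate point is the strict monotonicity of the valley depth $m(d)$, and then matching it against the endpoint data $\psi(d_e(c))<0\le\psi(d_2(c))$ furnished by \Cref{bounds_for_de} to legitimise the intermediate-value step and pin down the sign of $m$ at the balance parameter.
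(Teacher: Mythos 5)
Your proposal is correct and follows essentially the same route as the paper: both cases are settled by the monotonicity of $e_3(0,\cdot)$, $e_3(1,\cdot)$ and of the valley value $e_3(t_1(\cdot),\cdot)$ from \Cref{properties_of_e3}, combined with the endpoint information at $d_2(c)$ from \Cref{bounds_for_de}. Your write-up is in fact slightly more careful than the paper's on two points it leaves implicit --- the envelope-type chaining that shows $d\mapsto m(d)$ is strictly increasing despite the moving minimiser $t_1(d)$, and the verification that $m(d)<0$ at the balance parameter so that $|e_3(t_1(d),d)|=-m(d)$ --- but these are refinements of the same argument rather than a different approach.
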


\begin{proof}
\begin{enumerate}
\item Since $e_3(1,\cdot)$ is increasing every $d>d_e(c)$ induces a worse approximant and since $e_3(0,\cdot)$ is decreasing every $d<d_e(c)$ induces a worse approximant, hence $d^*(c)=d_e(c)$.

\item  By \Cref{properties_of_e3} the function $e_3(t_1(\cdot),\cdot)$ is increasing, hence $|e_3(t_1(\cdot),\cdot)|$ is decreasing. Since $e_3(1,\cdot)$ is increasing there exists a unique $d\in (d_e(c),d_2(c)]$ such that $e_3(1,d)=|e_3(t_1(d),d)|$. For every $d'<d$ we have $|e_3(t_1(d'),d')|>|e_3(t_1(d),d)|$ and for every $d'>d$ we have $e_3(1,d')>e_3(1,d)>0$, hence $d=d^*(c)$. \qedhere
\end{enumerate}    
\end{proof}

Although the analysis of the error function itself is very technical, the algorithm for determining the optimal parameter is very simple. For some circular arcs, there are closed forms for the optimal parameter, as shown in Example 1. For some, the optimal parameter is obtained as the solution of a system of nonlinear equations. In the above analyses, we have found good bounds for the optimal parameter so we have good initial approximations for solving the system numerically.
In \Cref{table:G1}, numerical values of optimal parameters $d^*$ according to the curvature error and optimal parameters $d^r$ according to the radial error \cite{Vavpetic-Zagar-2018-optimal-circle-arcs} are shown for several values of an inner angle $2\varphi$.

Since $d^r > d^*$, the optimal approximant with respect to the radial error has maximal curvature at the boundary points. The graph of the curvature error function oscillates three times, but the maximum is always greater than the absolute value of the minimum (see the left graph on \Cref{fig:e3compare}). Also, the graph of the radial error of the optimal approximant with respect to the curvature error oscillates three times, and the maximal amplitude is at the midpoint (see the right graph on \Cref{fig:e3compare}).
\begin{table}[ht]
\begin{center}
\begin{tabular}{ |c|c|c|c|c|c|c| } 
 \hline
 $\varphi$ & $d^*$ & curvature error & radial error & $d^r$ & curvature error & radial error \\ 
   \hline
 $\frac{\pi}{2}$ & $1{.}272063$ & $1{.}76 \times 10^{-1}$ & $4{.}60 \times 10^{-2}$ & 
                    $1{.}315740$ & $2{.}30 \times 10^{-1}$& $1{.}32 \times 10^{-2}$\\ \hline
 $\frac{\pi}{3}$ & $0{.}879981$ & $3{.}58 \times 10^{-2}$ & $5{.}01\times 10^{-3}$ & 
                    $0{.}886910$ & $5{.}66\times 10^{-2}$ & $1{.}11\times 10^{-3}$\\ 
 \hline
 $\frac{\pi}{4}$ & $0{.}778639$ & $1{.}16 \times 10^{-2}$ & $9.03\times 10^{-4}$& 
                   $0{.}780526$ & $1.93\times 10^{-2}$& $1{.}96\times 10^{-4}$\\ 
 \hline
 $\frac{\pi}{6}$ & $0{.}714105$ & $2{.}33 \times 10^{-3}$ & $8{.}00\times 10^{-5}$ & 
                    $0{.}714440$ & $4{.}03\times 10^{-3}$ & $1{.}71\times 10^{-5}$\\ 
 \hline
 $\frac{\pi}{8}$ & $0{.}692914$ & $7{.}40 \times 10^{-4}$ & $1{.}43\times 10^{-5}$ & 
                   $0{.}693017$ & $1{.}30\times 10^{-3}$ & $3{.}04\times 10^{-6}$\\ 
 \hline
 $\frac{\pi}{12}$ & $0{.}678197$ & $1{.}47 \times 10^{-4}$ & $1{.}26\times 10^{-6}$ &  
                    $0{.}678216$ & $2{.}60\times 10^{-4} $& $2{.}67\times 10^{-7}$\\ 
 \hline
\end{tabular}
\end{center}
\caption{The table of the optimal parameters $d^*(c)$ and the corresponding errors for cubic $G^1$ approximants. In the last three columns are the optimal parameters $d(c)$ according to the radial error and the corresponding errors. Note that in \cite{Vavpetic-Zagar-2018-optimal-circle-arcs}, the forms of control points are different, so the optimal parameters $d$ in the paper differ for the factor $\sqrt{1-c^2}$ from the parameters in the table.}
\label{table:G1}
\end{table}
\begin{figure}[h!]
\begin{minipage}{.5\textwidth}
  \centering
 \includegraphics[width=0.9\textwidth]{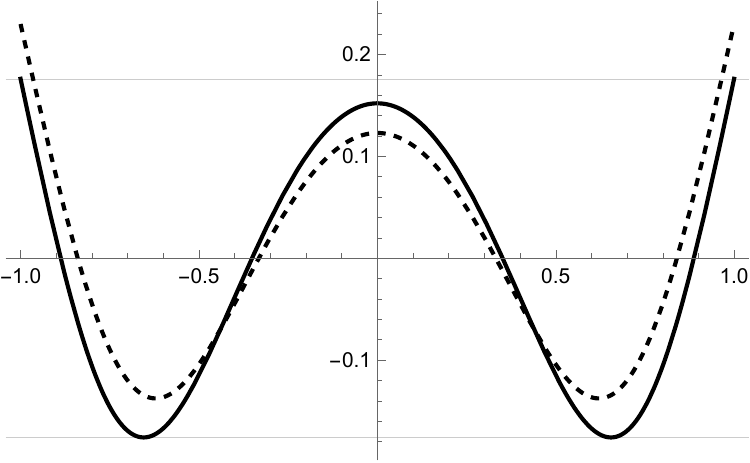}
\end{minipage}
\begin{minipage}{.5\textwidth}
\centering
  \includegraphics[width=0.9\textwidth]{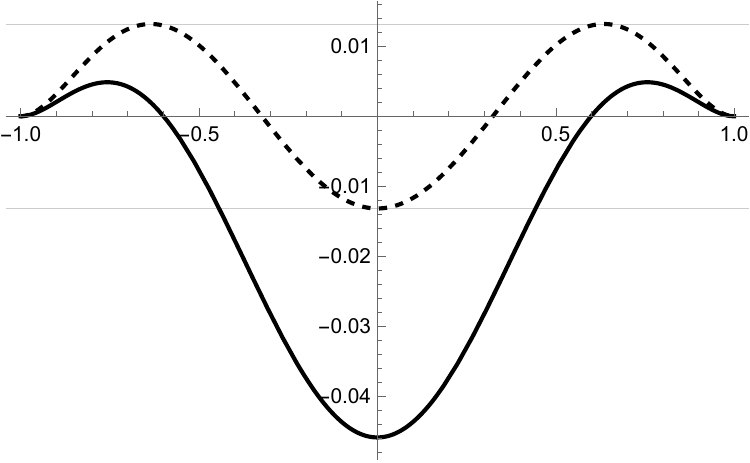}
\end{minipage}
  \caption[]{On the left, there are the graphs of the curvature error of the optimal approximant with respect to the curvature error (solid) and the optimal one with respect to the radial distance (dashed) for the half circle. On the right, there are graphs showing the radial error of the optimal approximant with respect to curvature error (solid) and the optimal one with respect to the radial distance (dashed) again for the half circle.}
  \label{fig:e3compare}
\end{figure}

\section{Quartic $G^2$ approximants}\label{sec:G2_quartic}

As in the previous sections, angle $\varphi\in (0,\tfrac\pi 2]$ is fixed and $c=\cos\varphi\in [0,1)$. Let us first look at the case $c=0$, which is very different from the others. If the boundary control points are $\bfm{b}_0=(0,-1)^T$ and $\bfm{b}_4=(0,1)^T$, $G^2$ condition forces that $\bfm{b}_1=\left(\tfrac{\sqrt{3}}2,-1 \right)^T$ and $\bfm{b}_3=\left(\tfrac{\sqrt{3}}2,1 \right)^T$. We set $\bfm{b}_2=(d,0)^T$ and we get
$$
e_4(t,d)=1-\frac{2 \left(2 t^2 \left(3-t^2\right)+\sqrt{3} d \left(1-t^2\right)^2\right)}{\sqrt{\left(3-3\left(2-d^2\right) t^2+\left(3+4 \sqrt{3} d-6 d^2\right) t^4+\left(4-4 \sqrt{3} d+3 d^2\right)t^6\right)^3}}=:1-\frac{f(t,d)}{\sqrt{g(t,d)^3}}.
$$
Since $e_4(0,d)=1 - \tfrac{2 d}{3}$ the function $e_4(0,\cdot)$ is strictly decreasing. Note that $e_4(1,d) =0$. 

The graphs of the functions $e_4 \left(\cdot,\tfrac 3 2 \right)$ and $e_4 \left(\cdot,\tfrac{8\sqrt{3}}9 \right)$ are on \Cref{fig:bound_for4_c0}. This are the functions with the following properties: $e_4 \left(0,\tfrac 3 2 \right)=0$ and $\tfrac{de_4}{dt}\left(1,\tfrac{8\sqrt{3}}9\right)=0$.
\begin{figure}[h!]
\centering\includegraphics[width=0.5\textwidth]{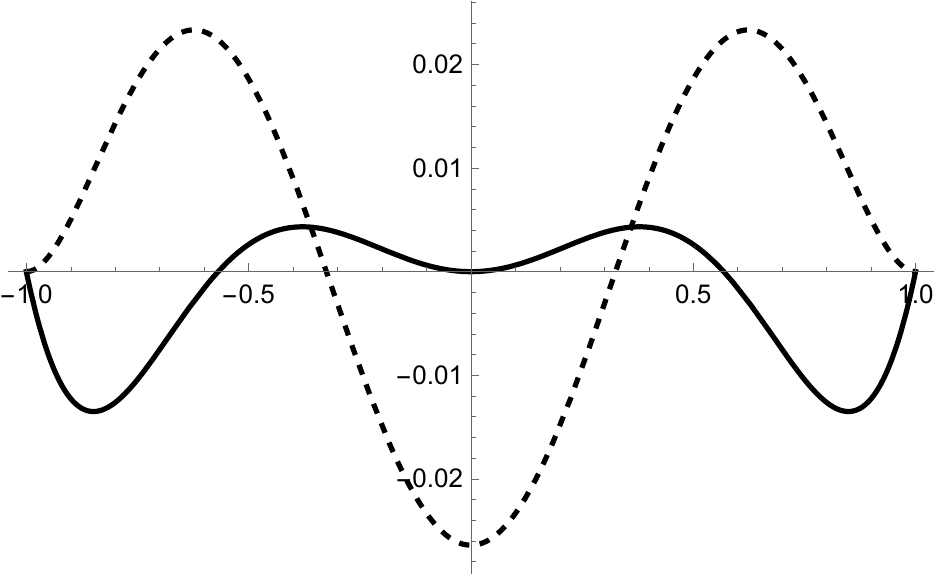}
  \caption[]{The graphs of the functions $e_4(\cdot,\tfrac 3 2)$ and $e_4(\cdot,\tfrac{8\sqrt{3}}9)$; for the second function the graph is dashed.}
  \label{fig:bound_for4_c0}
\end{figure}
We see that $\max |e_4(\cdot,\tfrac 3 2)|=-\min e_4(\cdot,\tfrac 3 2)$ and the minimum is on the interval $[\tfrac 3 5,1]$. Since \[
\tfrac{d e_4}{d d}(t,d)=-\left(\tfrac{d f}{d d}(t,d)g(t,d)-\tfrac 3 2f(t,d)\tfrac{d g}{d d}(t,d)\right)g(t,d)^{-\frac 5 2}=: h(t,d)g(t,d)^{-\frac 5 2}
\]
and
$
 h\left((1-\tau)\frac{3}{5} + \tau,  \frac{8\sqrt{3}}{9} \delta \right)
$
has positive coefficients in the basis $\mathcal{B}_\tau^{10}\times\mathcal{B}_\delta^{2}$, for every $t\in [\tfrac 3 5,1]$ the function $e_4(t,.)$ is strictly increasing for $d\in [0, \frac{8\sqrt{3}}{9}]$. Therefore, every $d<\tfrac 3 2$ induces a worse approximant than the parameter $d=\tfrac 3 2$. For the second function we see that $\max |e_4(\cdot,\tfrac{8\sqrt{3}}9)|=-\min e_4(\cdot,\tfrac{8\sqrt{3}}9)=-e_4(0,\tfrac{8\sqrt{3}}9)$. Since $e_4(0,d)$ is strictly decreasing, every $d>\tfrac{8\sqrt{3}}9$ induces a worse approximant than the parameter $d=\tfrac{8\sqrt{3}}9$. Therefore, we observe that $d^*(0)\in [\tfrac 3 2,\tfrac{8\sqrt{3}}9]$. 

There is a parameter $d\in [\tfrac 3 2,\tfrac{8\sqrt{3}}9]$, such that there exists $t\in[0,1]$ such that $e_4(0,d)=e_4(t,d)$ and $\tfrac{de_4}{dt}(t,d)=0$. The first equation is equivalent to $\tfrac{2d}3= f(t,d)g(t,d)^{- \frac{3}{2}}$ and the second is equivalent to $\tfrac{d f}{d t}(t,d)g(t,d)-\tfrac 3 2f(t,d)\tfrac{d g}{d t}(t,d)=0$. Using Gr\"obner basis for polynomials $\tfrac 1 t((\tfrac{2d}3)^2g(t,d)^3-f(t,d)^2)$ and $\tfrac 1 t(\tfrac{d f}{d t}(t,d)g(t,d)-\tfrac 3 2f(t,d)\tfrac{d g}{d t}(t,d))$ with order $d<t$ we see that the system has only one solution where $d\in[\tfrac 3 2,\tfrac{8\sqrt{3}}9]$. We denote the solution by $d^*(0)\approx 1.5112$. Since $\max|e_4(\cdot,d^*(0))|=-e_4(0,d^*(0))=-e_4(t,d^*(0))$, for some $t\in[\tfrac 3 5,1]$, we see that $d^*(0)$ is the optimal parameter.

Let us now consider the case $c\in (0,1)$. The control points are
\begin{align*}
  \bfm{b}_0&=(c,-\sqrt{1-c^2})^T,
  &\bfm{b}_1=(c,-\sqrt{1-c^2})^T+d\,(1-c^2,c\sqrt{1-c^2})^T,\\
\bfm{b}_2&=\left( \frac{3-4d^2(1-c^2)}{3c},0 \right)^T,
 &\bfm{b}_3=(c,\sqrt{1-c^2})^T+d(1-c^2,-c\sqrt{1-c^2})^T, \\
  \bfm{b}_4&=(c,\sqrt{1-c^2})^T,
\end{align*}
where $d>0$. The corresponding curvature error function is
$$
e_4(t,d)=1-\frac{2 c^2 \left(3 \left(3-4 d^2\right) \left(1-t^2\right)^2+8 c d^3 \left(1+3 t^4\right)-6 c d \left(1-t^2\right) \left(1-(5-4 c d)
   t^2\right)\right)}{\sqrt{\left(\left(1-c^2\right) t^2 \left(4 c d t^2+\left(3-4 d^2\right) \left(1-t^2\right)\right)^2+c^2 \left(2+(1-2 c d) \left(1-3
   t^2\right)\right)^2\right)^3}}=: 1-\frac{f(t,d)}{\sqrt{g(t,d)^3}}.
$$
Since we are considering $G^2$ approximation, the function $e_4(1,\cdot)$ is identically zero. Hence, the function $e_4(\cdot,d)$ has no local extremum at $t=1$, and we need more properties of the function $e_4(\cdot,d)$ as for $e_3(\cdot, d)$ in the $G^1$ cubic case. Numerical experiments have shown that the optimal parameter is near $\frac{3 c^2+\sqrt{6} \sqrt{2+c} (1-c)}{2(c^3+2)}$ which is the solution of the equation $e_4(0,d)=0$. Let us define two rational approximations of the solution as
\begin{align*}
d_1(c):= &\frac{1}{192
   \left(c^3+2\right)} \Bigl( 192 \sqrt{3} -8 \left(16+9 \sqrt{3}\right) c+\left(144+37
   \sqrt{3}\right) c^2 +3 \left(592+232 \sqrt{2}-533 \sqrt{3}\right) c^3  \\
   & -\left(2608+1104 \sqrt{2}-2415\sqrt{3}\right) c^4 + \left(1104+408 \sqrt{2}-973 \sqrt{3}\right) c^5 \Bigr),\\
d_2(c):= &\frac{1}{120
   \left(c^3+2\right) } \Bigl(  120 \sqrt{3} -8 \left(4+9 \sqrt{3}\right) c +\left(144-11
   \sqrt{3}\right) c^2+\left(444+588 \sqrt{2}-735 \sqrt{3}\right) c^3 \\ 
   & -\left(652+996 \sqrt{2}-1191\sqrt{3}\right) c^4 + \left(276+408 \sqrt{2}-493 \sqrt{3}\right) c^5 \Bigr),   
\end{align*}
for which it is not hard to see that 
$$
d_0(c):=\frac{1+\sqrt 2}2 c-\frac{\sqrt 2}2< \frac c 2< d_1(c)<d_2(c)<\frac{3 c^2+\sqrt{6} \sqrt{2+c} (1-c)}{4+2 c^3}<d_3(c):=\frac{1-\sqrt{3}}{2}(c-1)+\frac{1}{2} <\frac{3}{2c}
$$ 
holds for all $c\in (0,1)$.
We will show that the optimal parameter $d^*(c)$ is on the interval $I_c=[d_1(c),d_2(c)]$ for all $c\in(0,1)$. It should be noted that the two bounds are quite precise, as their difference is very small, which can be seen in \Cref{fig:e4meji}. 
\begin{figure}[h!]
  \centering
 \includegraphics[width=0.45\textwidth]{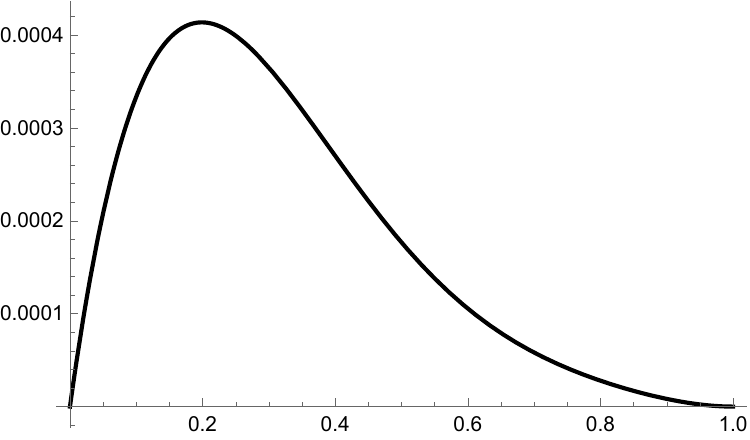}
\caption[]{The difference between the upper bound $d_2$ and the lower bound $d_1$ for the optimal parameter.}
  \label{fig:e4meji}
\end{figure}
Before we prove that $d^*(c)\in I_c$, let us show some properties of the function $e_4$ from which we get the essential properties of functions $e_4(\cdot,d_i(c))$ for further analysis.

\begin{lemma} \label{properties_of_e4}
Let $c\in (0,1)$.
\begin{enumerate}
\item Function $e_4(0,\cdot)$ is strictly increasing on $(\tfrac c 2,\tfrac 3{2c})$ and strictly decreasing on $(-\infty,\tfrac c 2)$.

\item For every $t\in [0{.}45,1)$ the function $e_4(t,\cdot)$ is strictly decreasing on $[d_1(c),d_3(c)]$.

\item For $d\in I_c$ the function $e_4(\cdot,d)$ has exactly 5 local extrema, local minima are at $t=0$ and on $(-1,-0{.}45)\cup (0{.}45,1)$, and $e_4(0,d)<0$, $e_4(0{.}9,d)<0$.
\end{enumerate}
\end{lemma}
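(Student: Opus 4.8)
My plan is to treat all three claims uniformly: each one reduces to the positivity of an explicit polynomial on a box in the variables $c,t,d$, which I would certify by the sign--of--coefficients principle from \Cref{sec:preliminaries}. The recipe is to send the relevant ranges of $c$, $t$ and $d$ to $[0,1]$ by affine substitutions, rewrite the polynomial in the associated $\mathcal B$-basis, and observe that all coefficients carry one sign. For part~(1) no machinery is needed. Putting $t=0$ and using $8cd^3-12d^2-6cd+9=(3-2cd)(3-4d^2)$ collapses the expression to
\[
e_4(0,d)=1-\frac{2\,(3-4d^2)}{c\,(3-2cd)^2},\qquad d<\tfrac{3}{2c},
\]
and a direct differentiation gives
\[
\frac{de_4}{dd}(0,d)=\frac{24\,(2d-c)}{c\,(3-2cd)^3}.
\]
On $\{d<\tfrac{3}{2c}\}$ the denominator is positive, so the sign is that of $2d-c$, which yields strict decrease on $(-\infty,\tfrac c2)$ and strict increase on $(\tfrac c2,\tfrac3{2c})$.

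For part~(2) I would write $e_4=1-f\,g^{-3/2}$, so that $\frac{de_4}{dd}=-h\,g^{-5/2}$ with $h=\frac{df}{dd}\,g-\tfrac32 f\,\frac{dg}{dd}$. Since $g>0$, the asserted strict decrease of $e_4(t,\cdot)$ is equivalent to $h>0$. I would substitute $t=\tfrac{9+11\tau}{20}$ (so that $t\in[0.45,1)$ corresponds to $\tau\in[0,1)$) and $d=(1-\delta)\,d_1(c)+\delta\,d_3(c)$ (so that $d\in[d_1(c),d_3(c)]$ corresponds to $\delta\in[0,1]$), and then verify that the resulting polynomial in $c,\tau,\delta$ has all coefficients positive in a basis $\mathcal B_c^{m}\times\mathcal B_\tau^{k}\times\mathcal B_\delta^{\ell}$ of sufficiently high degree. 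This is precisely the device used for $e_3(t,\cdot)$ in \Cref{properties_of_e3}.

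Part~(3) is the core. As $f$ and $g$ are even in $t$, so is $e_4(\cdot,d)$; hence $\frac{de_4}{dt}=-P\,g^{-5/2}$ with $P$ odd, and $P(t,d)=t\,Q(t^2,d)$ where $Q(\cdot,d)$ is a quartic. Thus $t=0$ is always critical, and every further extremum on $(-1,1)$ occurs in a symmetric pair $\pm t$ with $Q(t^2,d)=0$; it therefore suffices to count the roots of $s\mapsto Q(s,d)$ on $(0,1)$. I would first establish the three sign facts $Q(0,d)<0$, $Q(0.2025,d)>0$ and $Q(1,d)<0$ (with $0.2025=0.45^2$) by the coefficient method after the reparametrisation $d=(1-\delta)\,d_1(c)+\delta\,d_2(c)$; the intermediate value theorem then forces at least one root in $(0,0.45^2)$ and one in $(0.45^2,1)$. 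Together with the matching upper bound ``at most two roots on $(0,1)$'', the pattern is pinned down: $t=0$ is a local minimum because $\frac{d^2e_4}{dt^2}(0,d)=-Q(0,d)\,g(0,d)^{-5/2}>0$, there is a local maximum at some $t_a\in(0,0.45)$, and a local minimum at some $t_b\in(0.45,1)$, giving exactly five extrema located as claimed. The two remaining sign statements are then immediate: $e_4(0,d)<0$ follows from part~(1) together with the already established inequality $d_2(c)<\tfrac{3c^2+\sqrt6\,\sqrt{2+c}\,(1-c)}{4+2c^3}$ (the value at which $e_4(0,\cdot)$ vanishes), and $e_4(0.9,d)<0$ is the pair of coefficient checks $f(0.9,d)>0$ and $f(0.9,d)^2-g(0.9,d)^3>0$.

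The crux, and the step I expect to be hardest, is the uniform upper bound that $Q(\cdot,d)$ has at most two roots on $(0,1)$ for every $c\in(0,1)$ and $d\in I_c$. In the cubic case the analogous equation was biquadratic and a single discriminant sign discarded the spurious root; here $Q$ is a genuine quartic whose leading coefficient need not keep a constant sign on $I_c$, so a plain concavity argument is unavailable. The cleanest route I see is to pass to the M\"obius transform $\widetilde Q(v)=(1+v)^4\,Q\!\left(\tfrac{v}{1+v},d\right)$, whose positive roots are exactly the roots of $Q$ on $(0,1)$, and to prove by Descartes' rule that its coefficient sequence has exactly two sign changes for all admissible $(c,d)$. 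Certifying this fixed sign pattern --- that each coefficient of $\widetilde Q$ keeps a known, constant sign after the $d=(1-\delta)d_1(c)+\delta d_2(c)$ substitution --- is the delicate, computation-heavy part on which the whole count depends.
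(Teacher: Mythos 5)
Parts (1) and (2) of your plan coincide with the paper's proof and are fine. The genuine problem is in part (3), and it is not presentational: the middle sign fact you propose to certify is false. With your normalisation $\tfrac{de_4}{dt}=-t\,Q(t^2,d)\,g^{-5/2}$ and $Q(0,d)<0$ (local minimum at $t=0$), the claim $Q(0.2025,d)>0$ is equivalent to $\tfrac{de_4}{dt}(0.45,d)<0$. In fact $\tfrac{de_4}{dt}(0.45,d)>0$ for every $c\in(0,1)$ and $d\in I_c$ --- this is exactly the final positivity check in the paper's proof of \Cref{properties_of_e4}, and it is confirmed numerically: for $c=\tfrac12$ and the optimal $d\approx0.6319$ the interior maximum sits near $t\approx0.52$, not below $0.45$. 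So $Q(0.2025,d)<0$, your coefficient certification of the opposite inequality cannot succeed, and the corrected sign pattern $(-,-,-)$ at $s=0$, $0.2025$, $1$ produces no sign changes at all, so your intermediate-value step localises nothing. The conclusion you draw --- a local maximum at some $t_a\in(0,0.45)$ --- is also wrong, and not harmlessly so: the paragraph following \Cref{properties_of_e4} and the proof of \Cref{mainG2} need \emph{both} interior critical points $t_M(d)$ and $t_m(d)$ to lie in $[0.45,1)$ so that part (2) of the lemma applies to them. To pin down the roots one must use values of $e_4$ itself, not only of its derivative: the paper certifies $e_4(0,d)<0$, $e_4(0.45,d)>0$, $e_4(0.9,d)<0$, combines these with $e_4(1,d)=0$ and the single derivative sign at $t=0.45$, and concludes that the maximum and the minimum both lie in $(0.45,1)$ while no critical point lies in $(0,0.45]$.

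A secondary point: your worry that ``a plain concavity argument is unavailable'' for the at-most-two-roots bound is unfounded. After passing to $s=t^2$ the relevant polynomial is a quartic in $s$, and the paper certifies that its second derivative in $s$ (a quadratic) has positive coefficients in the basis $\mathcal{B}_c^{38}\times\mathcal{B}_s^{2}\times\mathcal{B}_\delta^{7}$ after substituting $d=(1-\delta)d_1(c)+\delta d_2(c)$; convexity on $[0,1]$ holds regardless of the sign of the leading coefficient and immediately caps the number of zeros at two. The M\"obius--Descartes detour is therefore unnecessary, and in any case it cannot rescue part (3) while the sign at $s=0.2025$ is misidentified.
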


\begin{proof}
\begin{enumerate}
\item For every $d<\frac{3}{2c}$ we have
\begin{equation*}
\tfrac{de_4}{dd}(0,d)= \frac{24 (c-2 d)}{c (2 c d-3)^3},
%\label{e4v0}
\end{equation*}  
and the statement holds.

\item Since $\tfrac{d e_4}{d d}(t,d)=-\left(\tfrac{d f}{d d}(t,d)g(t,d)-\tfrac 3 2f(t,d)\tfrac{d g}{d d}(t,d)\right)g(t,d)^{- 5/2}=: -h(t,d)g(t,d)^{-5/2}$ and
$$
\frac{(2+c^3)^6}{c^4} h\left((1-\tau)0{.}45+\tau,(1-\delta)d_1(c)+\delta d_3(c)\right)
$$
has positive coefficients in the basis $\mathcal{B}_c^{31}\times\mathcal{B}_\tau^{10}\times\mathcal{B}_\delta^{6}$, the statement holds.

\item The local extremum is at $t=0$. The number of local extrema on the interval $(0,1)$ of the function $e_4(\cdot,d)$ is the same as that of the function $e_4(\sqrt{\cdot},d)$. 
The derivative $\tfrac{d e_4}{dt}(\sqrt t,d)$ is of the form $c^2 h(t,d)g(\sqrt{t},d)^{-5/2}$ for some polynomial $h$. To show that a function $e_4(\sqrt{\cdot},d)$ has two local extrema on $(0,1)$, we have to prove that $h(\cdot,d)$ has two zeros on $(0,1)$. To prove that it suffices to show that the second derivative $\tfrac{d^2 h}{dt^2}(t,d)$ is non-zero for $(t,d)\in(0,1)\times(d_1(c),d_2(c))$. This is true since $(2+c^3)^7\tfrac{d^2 h}{dt^2}(t,(1-\delta)d_1(c)+\delta d_2(c))$ has positive coefficients in the basis $\mathcal{B}_c^{38}\times\mathcal{B}_t^{2}\times\mathcal{B}_\delta^{7}$. Since $e_4(0,d_2(c))<0$ 
and $e_4(0,\cdot)$ is strictly increasing on $I_c$, we have $e_4(0,d)<0$ for all $d\in I_c$. Since $e_4(0{.}45,\cdot)$ is strictly decreasing and $ (2+c^3)^{-12}\left(g(0{.}45,d_2(c))^3-f(0{.}45,d_2(c))^2\right)$ has positive coefficient in the basis $\mathcal{B}_c^{66}$ we have
$$
e_4(0{.}45,d)\ge e_4(0{.}45,d_2(c))=1-\frac{f(0{.}45,d_2(c))}{\sqrt{g(0{.}45,d_2(c))^3}}>0
$$
for all $d\in I_c$. Since $e_4(0{.}9,\cdot)$ is strictly decreasing, $(2+c^3)^{-3}f(0{.}9,d_1(c))$ has positive coefficients in the basis $\mathcal{B}_c^{18}$, and $(2+c^3)^{-12}\left(f(0{.}9,d_2(c))^2-g(0{.}9,d_1(c))^3\right)$ has positive coefficient in the basis $\mathcal{B}_c^{66}$ we have
$$
e_4(0{.}9,d)\le e_4(0{.}9,d_1(c))=1-\frac{f(0{.}9,d_1(c))}{\sqrt{g(0{.}9,d_1(c))^3}}<0
$$
for all $d\in I_c$. Since 
\begin{align*}
\frac{de_4}{dt}(0{.}45,d) &= -\left(\tfrac{d f}{d t}(0{.}45,d)g(0{.}45,d)-\tfrac 3 2f(0{.}45,d)\tfrac{d g}{d t}(0{.}45,d)\right)g(0{.}45,d)^{-\frac 5 2}
 =: \frac{h(0{.}45,d)}{g(0{.}45,d)^{\frac 5 2}}
\end{align*}
and
\[
\frac{(2+c^3)^7}{c^5} h(0{.}45, (1-\delta)d_1(c)+\delta d_2(c)) 
\]
 has all coefficients positive in the basis $\mathcal{B}_c^{35} \times \mathcal{B}_\delta^{7}$, the statement follows.\qedhere
\end{enumerate}  
\end{proof}
From the above properties and the fact that $e_4(1,d)=0$, it follows that the functions $e_4(\cdot,d)$, for $d\in I_c$, have exactly five local extrema on $[-1,1]$. We will see that only for some values of $c$, the error function of the optimal approximant has the minimum at $t=0$ (see \Cref{fig:e4examples}).
\begin{figure}[h!]
\begin{minipage}{.5\textwidth}
  \centering
 \includegraphics[width=0.9\textwidth]{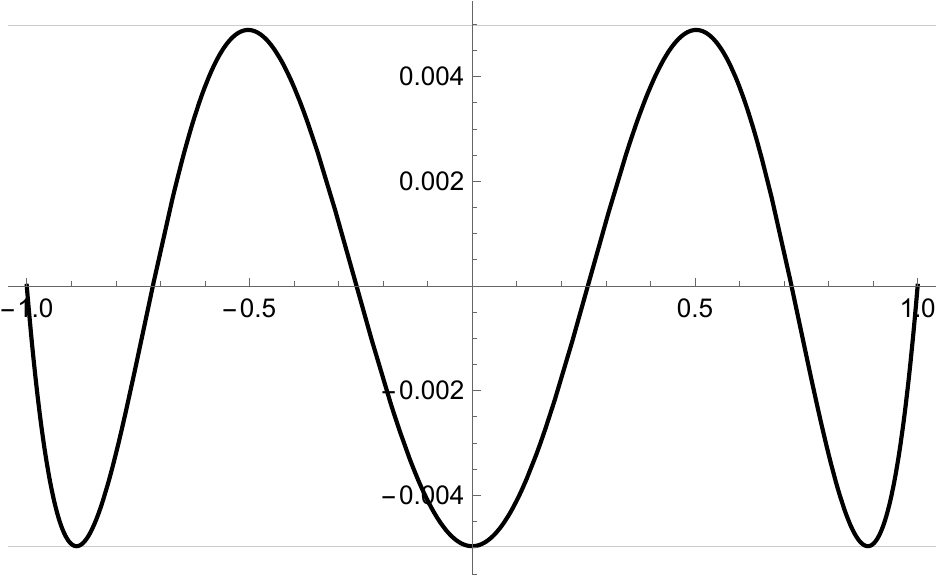}
\end{minipage}
\begin{minipage}{.5\textwidth}
\centering
  \includegraphics[width=0.9\textwidth]{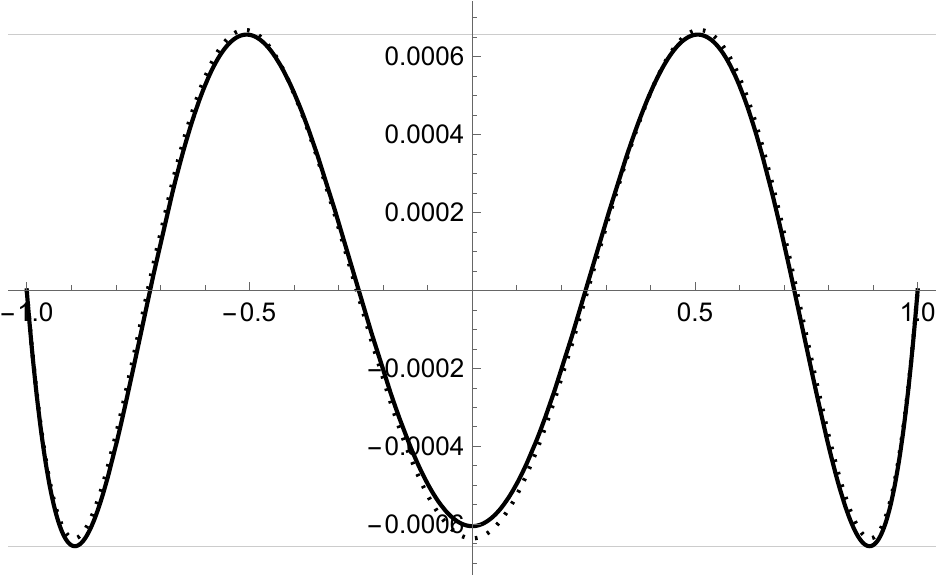}
\end{minipage}
  \caption[]{On the left it is the graph of the error function of the optimal approximant for $c=\tfrac 1{10}$. On the right, the solid graph is the graph of the error function of the optimal approximant for $c=\tfrac 1 2$, and the dotted graph is the graph of the error function where the values of all three minima coincide.}
  \label{fig:e4examples}
\end{figure}
This will be used in the following lemma, which describes some additional properties of the four functions $e_4(\cdot,d_i(c))$ that will be crucial for further analysis.

\begin{lemma} \label{properties_of_e4d}
Let $c\in (0,1)$.
\begin{enumerate}
\item We have $\max_{t\in[-1,1]}|e_4(t,d_1(c))|=-e_4(0,d_1(c))$.
\item There exists $t_0\in [0{.}45,1)$ such that $\max_{t\in[-1,1]}|e_4(t,d_2(c))|=-e_4(t_0,d_2(c))$.
\item We have $|e_4(0,d_0(c))|>\max_{t\in[-1,1]} |e_4(t,d_1(c))|$ and for every $d\in (0,d_0(c))$ it holds $ e_4(0{.}4,d) >\max_{t\in[-1,1]} |e_4(t,d_1(c))|$.

\item We have $ e_4(0,d_3(c)) >\max_{t\in[-1,1]} |e_4(t,d_1(c))|$.
\end{enumerate}
\end{lemma}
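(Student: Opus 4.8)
The four claims all compare a global value $\max_{t\in[-1,1]}|e_4(t,d_i(c))|$ against explicit reference quantities, and I would prove them by combining the qualitative shape of $e_4(\cdot,d)$ supplied by \Cref{properties_of_e4} with the nonnegative‑coefficient test from \Cref{sec:preliminaries}. Recall that for $d\in I_c$ the function $e_4(\cdot,d)$ is even, has exactly five local extrema, a local minimum at $t=0$ with $e_4(0,d)<0$, a symmetric pair of interior minima in $(0.45,1)\cup(-1,-0.45)$ with $e_4(0.9,d)<0$, a symmetric pair of positive interior maxima in between, and $e_4(1,d)=0$. Hence on $[0,1]$ the three candidates for $\max|e_4(\cdot,d)|$ are the central depth $-e_4(0,d)$, the positive maximum, and the outer depth $-e_4(t_0,d)$ at the interior minimizer $t_0$. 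A direct coefficient check on the explicit polynomial $f(t,d_i(c))$ shows it stays positive, so every one‑sided estimate $e_4(t,d)\le A$ or $e_4(t,d)\ge A$ with $A$ of known sign can be turned, after isolating $f/\sqrt{g^3}$ and squaring, into a polynomial inequality obtained by clearing $\sqrt{g^3}$ (of the form $f^2-(1-A)^2 g^3\ge 0$ when $A$ is constant, or $f(t)^2 g(0)^3\le f(0)^2 g(t)^3$ when comparing two evaluations); substituting the explicit $d_i(c)$ and an affine or $t\mapsto\sqrt{t}$ reparametrisation that sends the relevant range to $[0,1]$, this is verified by nonnegativity of the coefficients in a suitable $\mathcal{B}_c^{n}\times\mathcal{B}_t^{m}$ basis.

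For part (1) I would establish the sandwich $e_4(0,d_1(c))\le e_4(t,d_1(c))\le -e_4(0,d_1(c))$ for all $t\in[0,1]$ and $c\in(0,1)$: the left inequality says the central minimum is the global minimum (so the outer minimum is shallower), the right inequality says the positive maximum does not exceed the central depth, and both reduce by the squaring step above to coefficient‑positivity checks. Part (2) is the symmetric statement at $d_2(c)$, where the outer minimum dominates; its difficulty is that $t_0$ has no closed form, so I would sidestep it using the concrete evaluation point $t=0.9$ furnished by \Cref{properties_of_e4}. Since $t_0$ is the global minimizer, $e_4(t_0,d_2(c))\le e_4(0.9,d_2(c))$, so the one‑variable rational inequality $e_4(0.9,d_2(c))\le e_4(0,d_2(c))$ yields $-e_4(t_0,d_2(c))\ge -e_4(0,d_2(c))$, while the estimate $e_4(t,d_2(c))\le -e_4(0.9,d_2(c))$ for all $t$ (again by squaring and coefficient positivity) bounds the positive maximum by $-e_4(0.9,d_2(c))\le -e_4(t_0,d_2(c))$. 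Together these give $\max|e_4(\cdot,d_2(c))|=-e_4(t_0,d_2(c))$.

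Parts (3) and (4) are comparisons against the benchmark $-e_4(0,d_1(c))=\max|e_4(\cdot,d_1(c))|$ established in part (1), and are mostly one‑variable rational inequalities in $c$. For $|e_4(0,d_0(c))|>-e_4(0,d_1(c))$ and $e_4(0,d_3(c))>-e_4(0,d_1(c))$ I would use the closed form of $e_4(0,\cdot)$ together with its monotonicity from \Cref{properties_of_e4} (decreasing left of $c/2$, increasing on $(c/2,3/(2c))$, with $d_0(c)<c/2<d_1(c)<d_3(c)<3/(2c)$) to fix the signs, then clear denominators and apply the coefficient test in a single basis $\mathcal{B}_c^{n}$. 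The remaining assertion $e_4(0.4,d)>-e_4(0,d_1(c))$ for all $d\in(0,d_0(c))$ is a genuine two‑variable estimate; here I would substitute $d=\delta\,d_0(c)$ with $\delta\in[0,1]$ and verify nonnegativity of the coefficients in $\mathcal{B}_c^{n}\times\mathcal{B}_\delta^{m}$.

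The main obstacle is twofold. First, the logical handling of $\max|e_4|$ when the deciding extremum is the interior minimizer $t_0$ for which no formula is available; the remedy is the bracketing through the fixed points $t=0.45$ and $t=0.9$ provided by \Cref{properties_of_e4}, which replace $t_0$ by concrete rational comparisons. Second, the sheer size of the resulting certificates: after inserting the degree‑five rational expressions $d_i(c)$ with denominator $c^3+2$ and clearing the root $\sqrt{g^3}$, the polynomials reach very high degree, so the verification, though mechanical, hinges on choosing the clearing factors and comparison points so that every coefficient in the chosen $\mathcal{B}$ basis indeed comes out nonnegative; this, rather than any conceptual step, is where the real work lies.
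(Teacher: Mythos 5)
Your proposal is correct and follows essentially the same route as the paper's proof: the same sandwich $e_4(0,d_1(c))\le e_4(t,d_1(c))\le -e_4(0,d_1(c))$ for part (1), the same use of the concrete point $t=0{.}9$ as a proxy for the unknown minimizer $t_0$ in part (2), the same substitution $d=\delta\, d_0(c)$ in part (3), and the same reduction of part (4) to $e_4(0,d_3(c))+e_4(0,d_1(c))>0$, all certified by squaring away $\sqrt{g^3}$ and checking coefficient positivity in the $\mathcal{B}$ bases. No substantive differences to report.
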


\begin{proof}
\begin{enumerate}
\item We have to show that $e_4(0,d_1(c))+e_4(t,d_1(c))\le 0$ and $e_4(0,d_1(c))-e_4(t,d_1(c))\le 0$ for all $t\in [0{.}45,1]$, since $e_4(\cdot,d_1(c))$ has no local extrema on $(0,0{.}45)$ (\Cref{properties_of_e4}).

For the first inequality we have to show that $f(t,d_1(c))\ge (1+e_4(0,d_1(c)))\sqrt{g(t,d_1(c))^3}$. We have $f(t,d_1(c))\ge 0$ for all $t\in[0{.}45,1]$, since $(2+c^3)^3c^{-3}f(\frac{55\tau +45}{100},d_1(c))$ has positive coefficients in the basis $\mathcal{B}_c^{15}\times\mathcal{B}_\tau^4$. Since $e_4(0,d)=1-\tfrac{6-8 d^2}{c (3-2 c d)^2}$, $f(t,d_1(c))^2-(1+e_4(0,d_1(c))^2g(t,d_1(c))^3$ is a rational function and it is positive for $t\in[0{.}45,1]$ since it has the form $c^6(1-c)^3(3-2 c d_1(c))^4(2+c^3)^{-16}h(t,c)$, where $h(\frac{55\tau +45}{100},c)$ has positive coefficients in the basis $\mathcal{B}_c^{81}\times\mathcal{B}_{\tau}^{37}$. 

The second inequality follows since $1-e_4(0,d_1(c))\ge 0$ and $(1-e_4(0,d_1(c))^2g(t,d_1(c))^3-f(t,d_1(c))^2$ is a rational function and it is positive for $t\in [0{.}45,1]$ since it has the form $t^2 c^6(1-c)^3(3-2 c d_1(c))^{-4}(2+c^3)^{-16}h(t,c)$, where $h(\frac{55\tau +45}{100},c)$ has positive coefficients in the basis $\mathcal{B}_c^{75}\times\mathcal{B}_{\tau}^{16}$.

\item By the previous lemma we know that $e_4(\cdot,d_2(c))$ has a local minimum at some $t_0\in[0{.}45,1)$. So it is enough to show that $e_4(0{.}9,d_2(c))<e_4(0,d_2(c))$ and $e_4(0{.}9,d_2(c))+e_4(t,d_2(c))\le 0$ for all $t\in [0,1]$. 

Since $(2+c^3)^3c^{-3}f(0{.}9,d_2(c))$ has positive coefficients in the basis $\mathcal{B}_c^{15}$ we have $f(0{.}9,d_2(c))\ge 0$. Hence, for the first inequality, it is enough to show that $f(0{.}9,d_2(c))^2g(0{.}9,d_2(c))^{-3}>f(0,d_2(c))^2g(0,d_2(c))^{-3}$. This follows since
$$
\frac{(2+c^3)^{16}}{c^{12}(3-2c d_2(c))^2}\left(f(0{.}9,d_2(c))^2g(0,d_2(c))^3-g(0{.}9,d_2(c))^3f(0,d_2(c))^2 \right)
$$
has positive coefficients in the basis $\mathcal{B}_c^{78}$. For the second inequality, 
we have to prove $2\sqrt{g(0{.}9,d_2(c))^3g(t,d_2(c))^3}-f(0{.}9,d_2(c))\sqrt{g(t,d_2(c))^3}-f(t,d_2(c))\sqrt{g(0{.}9,d_2(c))^3}\le 0$. We have shown above that $f(t,d_2(c))\ge 0$ for all $t\in [0{.}45,1]$, so it is enough to show that 
\begin{align*}
4g(0{.}9,d_2(c))^3g(t,d_2(c))^3\le\left(f(0{.}9,d_2(c))\sqrt{g(t,d_2(c))^3}+f(t,d_2(c))\sqrt{g(0{.}9,d_2(c))^3}\right)^2,
\end{align*}
which is equivalent to
\begin{multline*}
l(t,c):=4g(0{.}9,d_2(c))^3g(t,d_2(c))^3-f(0{.}9,d_2(c))^2g(t,d_2(c))^3-f(t,d_2(c))^2g(0{.}9,d_2(c))^3\le \\2f(0{.}9,d_2(c))f(t,d_2(c))\sqrt{g(t,d_2(c))^3g(0{.}9,d_2(c))^3}=:r(t,c).
\end{multline*}
Then $l(t,c)\ge 0$ for all $(c,t)\in [0,1]\times[0{.}45,1]$ since $c^{12}(2+c^3)^{-24}l(\frac{55\tau +45}{100},c)$ has positive coefficients in the basis $\mathcal{B}_c^{120}\times \mathcal{B}_\tau^{18}$, and $r(t,c)^2-l(t,c)^2\ge 0$ for all $(c,t)\in [0,1]\times[0{.}45,1]$ since $c^{24}(2+c^3)^{-48}\left(r(\frac{55\tau +45}{100},c)^2-l(\frac{55\tau +45}{100},c)^2\right)$ has positive coefficients in the basis $\mathcal{B}_c^{264}\times \mathcal{B}_\tau^{67}$.
Hence, the second inequality follows.

\item Due to $e_4(0,d_0(c))<0$ the first inequality is equal to $-1+\tfrac{f(0,d_0(c))}{\sqrt{g(0,d_0(c))^3}}>-1+\tfrac{f(0,d_1(c))}{\sqrt{g(0,d_1(c))^3}}$. The inequality follows from the fact that $\tfrac{(2 + c^3)^3}{c^5(1-c)^3}\left(f(0,d_0(c))\sqrt{g(0,d_1(c))^3}-f(0,d_1(c))\sqrt{g(0,d_0(c))^3} \right)$ is polynomial with positive coefficients in the basis $\mathcal{B}_c^{17}$.

Since $c^{-2}f(0{.}4,d_0(c)\delta)$ is a polynomial with positive coefficients in the basis $\mathcal{B}_c^{4}\times\mathcal{B}_\delta^{3}$ we have
$f(0{.}4,d_0(c)\delta)>0$ for $\delta \in [0,1]$. So the second inequality is equivalent to 
\begin{align*}
e_4(0{.}4,d_0(c)\delta) +e_4(0,d_1(c))=1 +e_4(0,d_1(c))-\frac{f(0{.}4,d_0(c)\delta)}{\sqrt{g(0{.}4,d_0(c)\delta)^3}}&>0,\\
(2+c^3)^{4}(3-2cd_1(c))^4\left((1 +e_4(0,d_1(c))^2g(0{.}4,d_0(c)\delta)^3-f(0{.}4,d_0(c)\delta)^2\right)&>0.
\end{align*}
The inequality holds since the left side of the last inequality is a polynomial with positive coefficients in the basis $\mathcal{B}_c^{42}\times\mathcal{B}_\delta^{12}$,

\item Due to $e_4(0,d_3(c))>0$, we have to show that  $e_4(0,d_3(c))+e_4(0,d_1(c))>0$. Since
$$
\frac{(2+c^2)^2}{c^2}\left(e_4(0,d_3(c))+e_4(0,d_1(c))\right)\sqrt{g(0,d_3(c))^3g(0,d_1(c))^3}
$$
is a polynomial with positive coefficients in the basis $\mathcal{B}_c^{16}$, the inequality follows.
\qedhere
\end{enumerate}
\end{proof}

From the above properties of the functions $e_4(\cdot,d_i(c))$, it is now easy to show that the optimal parameter is on the interval $I_c$.

\begin{lemma} \label{bounds_for_d*}
For every $c \in  (0,1)$ we have $d^*(c)\in I_c$.
\end{lemma}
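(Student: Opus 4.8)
The plan is to establish the two bounds separately: I would show that every parameter $d<d_1(c)$ induces a strictly worse approximant than $d_1(c)$, and that every parameter $d>d_2(c)$ induces a strictly worse approximant than $d_1(c)$ or $d_2(c)$. Since $d_1(c),d_2(c)\in I_c$ are admissible and $d^*(c)$ minimizes $\max_{t}|e_4(t,d)|$, these two facts force $d^*(c)\in[d_1(c),d_2(c)]=I_c$. Throughout I use that $\max_t|e_4(t,d_1(c))|=-e_4(0,d_1(c))$ and $\max_t|e_4(t,d_2(c))|=-e_4(t_0,d_2(c))$ for some $t_0\in[0{.}45,1)$, as provided by \Cref{properties_of_e4d}, together with the monotonicity facts of \Cref{properties_of_e4}.

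For the lower bound I would split $(0,d_1(c))$ at $c/2$. On $(c/2,d_1(c))$ the function $e_4(0,\cdot)$ is strictly increasing (\Cref{properties_of_e4}) and $e_4(0,d_1(c))<0$, so $e_4(0,d)<e_4(0,d_1(c))<0$ and hence $\max_t|e_4(t,d)|\ge|e_4(0,d)|>-e_4(0,d_1(c))=\max_t|e_4(t,d_1(c))|$. The delicate region is $(0,c/2]$, where $e_4(0,\cdot)$ is strictly decreasing; here the anchor $|e_4(0,d_0(c))|>\max_t|e_4(t,d_1(c))|$ of \Cref{properties_of_e4d} does the work. Assuming $e_4(0,d_0(c))<0$, for every $d\in(\max\{0,d_0(c)\},c/2]$ one has $d>d_0(c)$, so decreasing monotonicity gives $|e_4(0,d)|>|e_4(0,d_0(c))|>\max_t|e_4(t,d_1(c))|$; on the residual interval $(0,d_0(c))$ (nonempty only when $d_0(c)>0$), where the value at $t=0$ is too small, I would instead test $t=0{.}4$ and invoke $e_4(0{.}4,d)>\max_t|e_4(t,d_1(c))|$ from \Cref{properties_of_e4d}. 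This covers all $d<d_1(c)$ and yields $d^*(c)\ge d_1(c)$.

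For the upper bound I would split $(d_2(c),\infty)$ at $d_3(c)$. On $(d_2(c),d_3(c)]\subset[d_1(c),d_3(c)]$, \Cref{properties_of_e4} guarantees that $e_4(t_0,\cdot)$ is strictly decreasing, so $e_4(t_0,d)<e_4(t_0,d_2(c))<0$ and therefore $\max_t|e_4(t,d)|\ge|e_4(t_0,d)|>-e_4(t_0,d_2(c))=\max_t|e_4(t,d_2(c))|$, making $d$ worse than $d_2(c)$. For $d>d_3(c)$ I would switch to the midpoint: since $e_4(0,\cdot)$ is increasing on $(c/2,\tfrac{3}{2c})$ with $e_4(0,d_3(c))>0$, and $e_4(0,d_3(c))>\max_t|e_4(t,d_1(c))|$ by \Cref{properties_of_e4d}, every such $d<\tfrac{3}{2c}$ satisfies $\max_t|e_4(t,d)|\ge e_4(0,d)>e_4(0,d_3(c))>\max_t|e_4(t,d_1(c))|$, so $d$ is worse than $d_1(c)$; the remaining tail $d\ge\tfrac{3}{2c}$ is handled directly, as there $6-8d^2<0$ forces $e_4(0,d)>1$. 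Hence $d^*(c)\le d_2(c)$.

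The proof is essentially a bookkeeping of monotonicity once \Cref{properties_of_e4} and \Cref{properties_of_e4d} are in hand, so I expect no genuinely hard computational step here; the care needed is conceptual. The main subtlety is that no single evaluation point controls the error across the whole parameter range: the midpoint value $e_4(0,d)$ is useful only where it is both negative and monotone in the favorable direction, which fails for very small $d$, forcing the auxiliary test point $t=0{.}4$ and the anchor at $d_0(c)$; symmetrically, on the upper side one must hand off from the comparison with $d_2(c)$ (via $t_0$) to the comparison with $d_1(c)$ (via $t=0$) exactly at $d_3(c)$. Pinning down the sign of $e_4(0,d_0(c))$ and the position of $d_0(c)$ relative to $0$ (i.e.\ whether $c\lessgtr 2-\sqrt2$) is the one place I would be most careful.
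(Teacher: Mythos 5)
Your proposal follows essentially the same route as the paper's proof: the same decomposition of the parameter axis at $d_0(c)$, $c/2$, $d_1(c)$, $d_2(c)$, $d_3(c)$ and $\tfrac{3}{2c}$, with each piece excluded by the same monotonicity facts from \Cref{properties_of_e4} and the same anchors from \Cref{properties_of_e4d}. The one slip is in the tail $d\ge\tfrac{3}{2c}$: the closed form $e_4(0,d)=1-\tfrac{6-8d^2}{c(3-2cd)^2}$ is only valid for $d<\tfrac{3}{2c}$ (the denominator of $e_4$ carries $|3-2cd|^3$, so the sign flips past $\tfrac{3}{2c}$), and the paper in fact shows $e_4(0,d)<-1$ there rather than $e_4(0,d)>1$; since either way $|e_4(0,d)|>1$, the exclusion of that tail still goes through after correcting the sign.
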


\begin{proof}
Since $\tfrac{de_4}{dd}(0,d)=\tfrac{24(2d-c)}{c(2cd-3)^3}>0$ for $d>\tfrac 3{2c}$ and $\lim_{d\to\infty} e_4(0,d)=1-2c^{-3}<-1$, we have $e_4(0,d)<-1$ for all $d>\tfrac 3{2c}$. For $d=\tfrac 3{2c}$ the curvature at $t=0$ is not defined, so $d^*(c)<\tfrac 3{2c}$.

For $d_3(c)\le d<\tfrac 3{2c}$, we have $e_4(0,d)\ge e_4(0,d_3(c))>\max_{t\in[-1,1]} |e_4(\cdot,d_1(c))|$. For $d_2(c)<d\le d_3(c)$, we have $e_4(t_0,d)<e_4(t_0,d_2(c)) = - \max_{t\in[-1,1]} |e_4(\cdot,d_2(c))| $ for some $t_0\in [0{.}45,1)$. For $d_0(c)\le d<d_1(c)$, we have $e_4(0,d)<e_4(0,d_1(c))$. For $0<d<d_0(c)$, we have $e_4(0{.}4,d)>|e_4(0,d_1) |=\max_{t\in[-1,1]} |e_4(\cdot,d_1(c))|$. Therefore, $d^*(c)\in I_c$. 
\end{proof}

By lemma \ref{properties_of_e4} for every $d\in I_c$ the function $e_4(\cdot,d)$ has a local minimum at $t=0$ and two local extrema on $(0{.}45,1)$. Let us denote by $t_{M}(d)$ and $t_{m}(d)$ the points on $(0,1)$ at which the function $e_4(\cdot,d)$ has the local maximum and local minimum, respectively.

Since $e_4(0,d_1(c) ) < e_4(t_{m}(d_1(c)),d_1(c) ) $, $e_4(t_{m}(d_2(c)),d_2(c)) < e_4(0,d_2(c)) $, $e_4(0,\cdot)$ is strictly increasing on $I_c$ and $e_4(t_{m}(\cdot),\cdot)$ is strictly decreasing on $I_c$, there exists a unique $d_e(c) \in I_c$ such that $e_4(0,d_e(c)) = e_4(t_{m}(d_e(c)) ,d_e(c))$.

\begin{theorem}\label{mainG2}
Let $c\in (0,1)$.
\begin{enumerate}
\item If $e_4(t_{M}(d_e(c)),d_e(c)) \le |e_4(0,d_e(c))|$, then $d^*(c)=d_e(c)$.

\item If $e_4(t_{M}(d_e(c)),d_e(c)) > |e_4(0,d_e(c))|$, then there exists a unique $d>d_e(c)$ such that \\ 
$e_4(t_{M}(d),d) = |e_4(t_{m}(d),d)|$. Then $d^*(c)=d$.
\end{enumerate}
\end{theorem}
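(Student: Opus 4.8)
The plan is to reduce the whole optimisation to the behaviour of three scalar functions of $d$ and to exploit their monotonicity. By symmetry it suffices to work on $[0,1]$, and by \Cref{properties_of_e4} the only local extrema of $e_4(\cdot,d)$ there are the minimum at $t=0$, the maximum at $t_M(d)$ and the minimum at $t_m(d)$, while $e_4(1,d)=0$. Consequently, on $I_c$,
$$
\max_{t\in[-1,1]}|e_4(t,d)|=\max\bigl(A(d),B(d),C(d)\bigr),
$$
where $A(d):=|e_4(0,d)|=-e_4(0,d)$, $B(d):=e_4(t_M(d),d)$ and $C(d):=|e_4(t_m(d),d)|=-e_4(t_m(d),d)$; here $e_4(0,d)<0$ and $e_4(t_m(d),d)<0$ come from \Cref{properties_of_e4}, and $B(d)>0$ because on $(0{.}45,t_M(d))$ the function increases from the positive value $e_4(0{.}45,d)$. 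I would then minimise this maximum over $d$, restricting to $d\in I_c$ by \Cref{bounds_for_d*}.

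Next I would record the three monotonicities. The function $A$ is strictly decreasing because $e_4(0,\cdot)$ is strictly increasing, and $C$ is strictly increasing because $e_4(t_m(\cdot),\cdot)$ is strictly decreasing on $I_c$ (both stated just before the theorem). The crucial remaining claim is that $B$ is strictly decreasing on $I_c$. For $d<d'$ in $I_c$, since $t_M(d')\in(0{.}45,1)$ and $e_4(t_M(d'),\cdot)$ is strictly decreasing there by \Cref{properties_of_e4}, we get $e_4(t_M(d'),d)>e_4(t_M(d'),d')=B(d')$; moreover $B(d)=e_4(t_M(d),d)$ is the maximum of $e_4(\cdot,d)$ over $[0{.}45,1]$, being the only interior critical value there exceeding the boundary values $e_4(0{.}45,d)>0$ and $e_4(1,d)=0$, so $B(d)\ge e_4(t_M(d'),d)$. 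Combining gives $B(d)>B(d')$.

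With $D(d):=\max\bigl(A(d),B(d)\bigr)$, the quantity to be minimised is $\max\bigl(D(d),C(d)\bigr)$, where $D$ is strictly decreasing and $C$ strictly increasing; hence $D-C$ is strictly decreasing, and the minimiser is exactly the unique $d$ at which $D=C$ (to its left $\max=D$ decreases, to its right $\max=C$ increases). It then remains to locate this crossing relative to $d_e$, the unique point of $I_c$ with $A(d_e)=C(d_e)$. In case (1), the hypothesis $B(d_e)\le A(d_e)$ gives $D(d_e)=A(d_e)=C(d_e)$, so the crossing is at $d_e$ and $d^*(c)=d_e(c)$. In case (2), $B(d_e)>A(d_e)=C(d_e)$ gives $D(d_e)=B(d_e)>C(d_e)$, while \Cref{properties_of_e4d}(2) says $\max_{t}|e_4(t,d_2(c))|=C(d_2(c))$, i.e.\ $C(d_2)\ge D(d_2)$; thus $D-C$ changes sign and there is a unique $\hat d\in(d_e,d_2]$ with $D(\hat d)=C(\hat d)$. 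Since $\hat d>d_e$ forces $A(\hat d)<A(d_e)=C(d_e)<C(\hat d)$, the value $D(\hat d)$ equals $B(\hat d)$ and not $A(\hat d)$, so $B(\hat d)=C(\hat d)$, i.e.\ $e_4(t_M(\hat d),\hat d)=|e_4(t_m(\hat d),\hat d)|$, and $d^*(c)=\hat d$.

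The main obstacle is the monotonicity of $B$, the local maximum value $e_4(t_M(d),d)$: unlike $A$ and $C$ it is not among the facts granted before the theorem, and it hinges both on knowing that $t_M(d)$ stays in the range $[0{.}45,1)$ on which \Cref{properties_of_e4} supplies strict decrease in $d$, and on the fact that $t_M(d)$ genuinely realises the maximum of $e_4(\cdot,d)$ over $[0{.}45,1]$. Everything else is a soft ordering of three monotone functions; the only quantitative inputs needed are the sign fact from \Cref{properties_of_e4d}(2) (that the overall maximum of $|e_4(\cdot,d_2(c))|$ is attained at $t_m$) and the localisation $d^*(c)\in I_c$ from \Cref{bounds_for_d*}.
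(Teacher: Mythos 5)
Your argument is correct and follows essentially the same route as the paper: reduce everything to the three extremal values at $0$, $t_M(d)$, $t_m(d)$, use that $|e_4(0,\cdot)|$ and $e_4(t_M(\cdot),\cdot)$ decrease while $|e_4(t_m(\cdot),\cdot)|$ increases on $I_c$, and locate the unique crossing relative to $d_e(c)$, with \Cref{properties_of_e4d}(2) supplying the sign at $d_2(c)$. The one point you flag as the main obstacle --- that $e_4(t_M(\cdot),\cdot)$ is strictly decreasing because $t_M(d)\in(0.45,1)$ and $t_M(d)$ realises the maximum there --- is exactly the step the paper asserts without elaboration, and your justification of it is the intended one.
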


\begin{proof}
\begin{enumerate}
\item Since $e_4(t_{m}(\cdot),\cdot)$ is decreasing on $I_c$ every $d>d_e(c)$ induces a worse approximant and since $e_4(0,\cdot)$ is increasing on $I_c$ every $d<d_e(c)$ induces a worse approximant, hence $d^*(c)= d_e(c)$.

\item  
Since functions $e_4(t_{m}(\cdot),\cdot)$ and $e_4(t_{M}(\cdot),\cdot)$ are both decreasing on $I_c$, we have
$$
e_4(t_{M}(d_e(c)),d_e(c)) > |e_4(t_{m}(d_e(c)) ,d_e(c))|\text{ and }e_4(t_{M}(d_2(c)),d_2(c))\le |e_4(t_{m}(d_2(c)) ,d_2(c))|.
$$ 
Hence there exists a unique $d \in I_c$ such that  $e_4(t_{M}(d),d) = |e_4(t_{m}(d),d)|$. Since $e_4(t_{M}(\cdot),\cdot)$ is decreasing on $I_c$ every $d' <d $ induces a worse approximant and since $e_4(t_{m}(\cdot),\cdot)$ is decreasing on $I_c$ every $d' > d$ induces a worse approximant, hence $d^*(c)= d$.\qedhere
\end{enumerate}
\end{proof}

The numerical solution of equations $e_4(0,d(c))= e_4(t_{m}(d(c)),d(c)) = - e_4(t_{M}(d(c)),d(c))$ is $c = 0.181857$. We can conclude that $d^*(c) = d_e(c)$ for $c < 0.181857$, i.e. for $\varphi > 0{.}44178\pi$, otherwise we have to increase $d$.

In \Cref{table:G2}, numerical values of optimal parameters $d^*$ according to the curvature error and optimal parameters $d^r$ according to the radial error \cite{Vavpetic-Zagar-2018-optimal-circle-arcs} are shown for several values of an inner angle $2\varphi$.

\begin{table}[ht]
\begin{center}
\begin{tabular}{ |c|c|c|c|c|c|c| } 
 \hline
 $\varphi$  & $d^*$ & curvature error & radial error & $d^r$ & curvature
   error & radial error \\ \hline
 $\frac{\pi }{2}$ & $1.511152$ & $7.43473\times 10^{-3}$ & $1.07833\times 10^{-3}$ & $1.513820$ & $9.21346\times 10^{-3}$ & $6.95275\times 10^{-4}$ \\ \hline
 $\frac{\pi }{3}$ & $0.631866$ & $6.89404\times 10^{-4}$ & $3.24026\times 10^{-5}$ & $0.631836$ & $7.7361\times 10^{-4}$ & $2.62103\times 10^{-5}$ \\ \hline
 $\frac{\pi }{4}$ & $0.569351$ & $1.25139\times 10^{-4}$ & $2.99058\times 10^{-6}$ & $0.569344$ & $1.35554\times 10^{-4}$ & $2.59234\times 10^{-6}$ \\ \hline
 $\frac{\pi }{6}$ & $0.529434$ & $1.10948\times 10^{-5}$ & $1.11141\times 10^{-7}$ & $0.529429$ & $1.17661\times 10^{-5}$ & $1.00281\times 10^{-7}$ \\ \hline
 $\frac{\pi }{8}$ & $0.516294$ & $1.98032\times 10^{-6}$ & $1.09529\times 10^{-8}$ & $0.516294$ & $2.08568\times 10^{-6}$ & $1.00093\times 10^{-8}$ \\ \hline
 $\frac{\pi }{12}$ & $0.507161$ & $1.74187\times 10^{-7}$ & $4.22729\times 10^{-10}$ & $0.507161$ & $1.82575\times 10^{-7}$ & $3.89715\times 10^{-10}$ \\
 \hline
\end{tabular}
\end{center}
\caption{The table of the optimal parameters $d^*(c)$ and the corresponding errors for quartic $G^2$ approximants. In the last three columns are the optimal parameters $d^r(c)$ according to the radial error and the corresponding errors. Note that in \cite{Vavpetic-Zagar-2018-optimal-circle-arcs}, the forms of control points are different, so the optimal parameters $d$ in the paper differ for the factor $\sqrt{1-c^2}$ from the parameters in the table. Note that the parameter for the half circle ($\varphi = \frac{\pi}{2}$) has a different meaning than for the other cases (see the definition of the control points). }
\label{table:G2}
\end{table}
\begin{figure}[h!]
\begin{minipage}{.5\textwidth}
  \centering
 \includegraphics[width=0.9\textwidth]{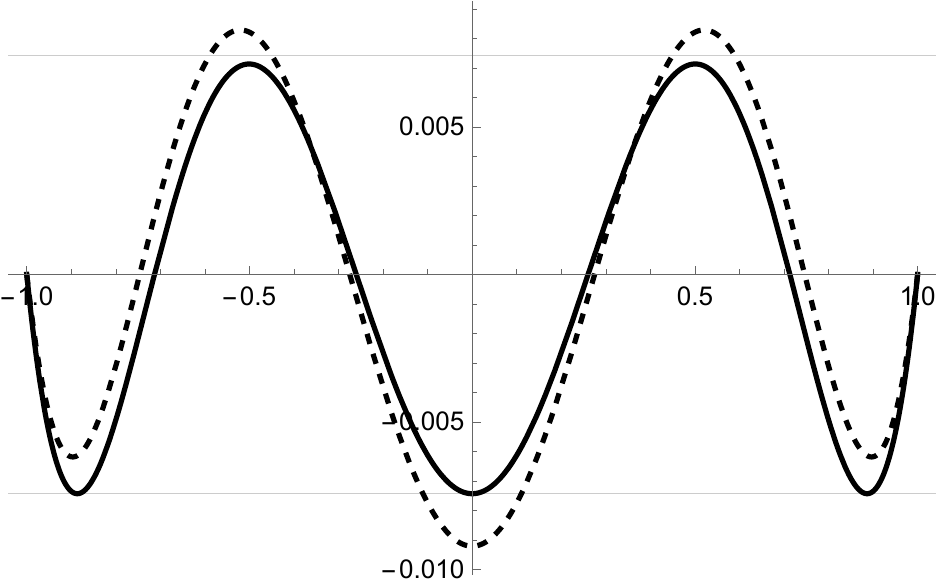}
\end{minipage}
\begin{minipage}{.5\textwidth}
\centering
  \includegraphics[width=0.9\textwidth]{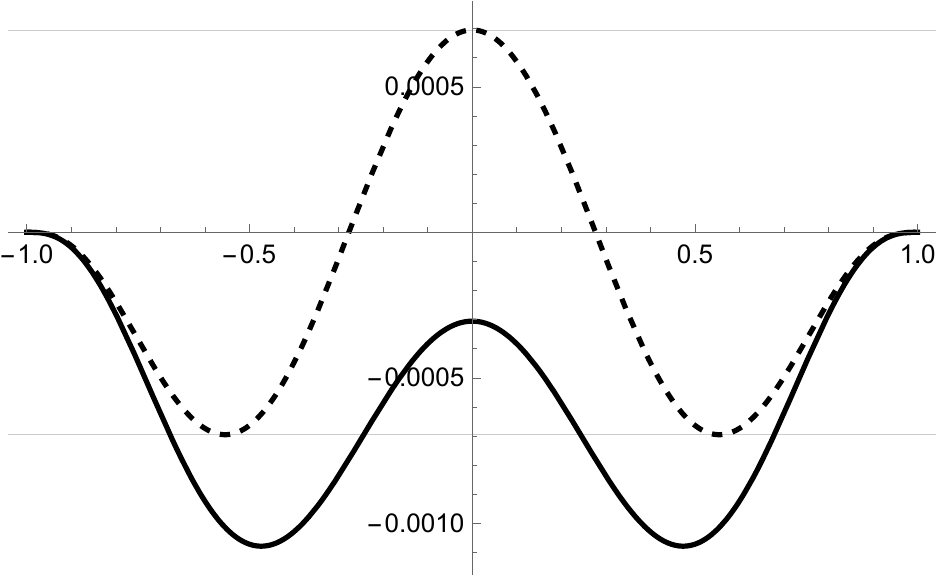}
\end{minipage}
  \caption[]{On the left, there are the graphs of the curvature error of the optimal approximant with respect to the curvature error (solid) and the optimal one with respect to the radial distance (dashed) for the half circle. On the right, there are graphs showing the radial error of the optimal approximant with respect to curvature error (solid) and the optimal one with respect to the radial distance (dashed) again for the half circle.}
  \label{fig:e4compare}
\end{figure}

% Naredila tudi za pi/2

% Bi bilo zanimivo poračunati tudi Hausdorfovo razdaljo?

\section{Conclusion}\label{sec:conclusion}

In this paper, we have confirmed that the cubic $G^1$ parametric approximant given in \cite{Kovac-Zagar-curvature16}, whose curvatures at the middle and at the boundary point coincide, is indeed, under some conditions, the best fit for a circular arc in terms of curvature. This only holds for a circular arc whose inner angle is less than a certain angle. We have shown that all other circular arcs' optimal approximants satisfy a different condition. A similar characterization for the optimal quartic $G^2$ approximant was done. The nature of the curvature function makes the analysis of parametric polynomials of a higher degree too complex. Still, numerical results show that the $G^{n-2}$ approximation by a parametric polynomial of degree $n$ yields a similar characterization of the optimal approximants. Also, the problem of finding the optimal quartic $G^1$ interpolant is a challenging issue, and we do not know the characterization of optimal approximants.

\vskip3mm
{\noindent \sl Acknowledgments.}
The authors are grateful to Emil \v{Z}agar for numerous fruitful discussions, valuable comments and suggestions.
The first author was supported by the Slovenian Research and Innovation Agency program P1-0294. The second author was supported by the Slovenian Research and Innovation Agency program P1-0292 and the grant J1-4031.

\bibliographystyle{elsarticle-harv}
\bibliography{curvature}

\begin{thebibliography}{10}
\expandafter\ifx\csname natexlab\endcsname\relax\def\natexlab#1{#1}\fi
\providecommand{\url}[1]{\texttt{#1}}
\providecommand{\href}[2]{#2}
\providecommand{\path}[1]{#1}
\providecommand{\DOIprefix}{doi:}
\providecommand{\ArXivprefix}{arXiv:}
\providecommand{\URLprefix}{URL: }
\providecommand{\Pubmedprefix}{pmid:}
\providecommand{\doi}[1]{\href{http://dx.doi.org/#1}{\path{#1}}}
\providecommand{\Pubmed}[1]{\href{pmid:#1}{\path{#1}}}
\providecommand{\bibinfo}[2]{#2}
\ifx\xfnm\relax \def\xfnm[#1]{\unskip,\space#1}\fi
%Type = Article
\bibitem[{de~Boor et~al.(1987)de~Boor, H{\"o}llig and
  Sabin}]{deBoor-Hoellig-Sabin-87-High-Accuracy}
\bibinfo{author}{de~Boor, C.}, \bibinfo{author}{H{\"o}llig, K.},
  \bibinfo{author}{Sabin, M.}, \bibinfo{year}{1987}.
\newblock \bibinfo{title}{High accuracy geometric {H}ermite interpolation}.
\newblock \bibinfo{journal}{Comput. Aided Geom. Design} \bibinfo{volume}{4},
  \bibinfo{pages}{269--278}.
%Type = Article
\bibitem[{Dokken et~al.(1990)Dokken, D{\ae}hlen, Lyche and
  M{\o}rken}]{Dokken-Daehlen-Lyche-Morken-90-CAGD}
\bibinfo{author}{Dokken, T.}, \bibinfo{author}{D{\ae}hlen, M.},
  \bibinfo{author}{Lyche, T.}, \bibinfo{author}{M{\o}rken, K.},
  \bibinfo{year}{1990}.
\newblock \bibinfo{title}{Good approximation of circles by curvature-continuous
  {B}\'ezier curves}.
\newblock \bibinfo{journal}{Comput. Aided Geom. Design} \bibinfo{volume}{7},
  \bibinfo{pages}{33--41}.
\newblock \bibinfo{note}{{C}urves and surfaces in CAGD '89 (Oberwolfach,
  1989)}.
%Type = Article
\bibitem[{Hur and Kim(2011)}]{HurKim-2011}
\bibinfo{author}{Hur, S.}, \bibinfo{author}{Kim, T.}, \bibinfo{year}{2011}.
\newblock \bibinfo{title}{The best ${G}^1$ cubic and ${G}^2$ quartic b\'ezier
  approximations of circular arcs}.
\newblock \bibinfo{journal}{J. Comput. Appl. Math.} \bibinfo{volume}{236},
  \bibinfo{pages}{1183--1192}.
%Type = Article
\bibitem[{Jakli\v{c} and Kozak(2018)}]{Jaklic-Kozak-2018-best-circle}
\bibinfo{author}{Jakli\v{c}, G.}, \bibinfo{author}{Kozak, J.},
  \bibinfo{year}{2018}.
\newblock \bibinfo{title}{On parametric polynomial circle approximation}.
\newblock \bibinfo{journal}{Numer. Algorithms} \bibinfo{volume}{77},
  \bibinfo{pages}{433--450}.
%Type = Article
\bibitem[{Jakli\v{c} and \v{Z}agar(2011)}]{Jaklic-Zagar-11}
\bibinfo{author}{Jakli\v{c}, G.}, \bibinfo{author}{\v{Z}agar, E.},
  \bibinfo{year}{2011}.
\newblock \bibinfo{title}{Curvature variation minimizing cubic {H}ermite
  interpolants}.
\newblock \bibinfo{journal}{Appl. Math. Comput.} \bibinfo{volume}{218},
  \bibinfo{pages}{3918--3924}.
\newblock \DOIprefix\doi{10.1016/j.amc.2011.09.039}.
%Type = Article
\bibitem[{Kova\v{c} and \v{Z}agar(2016)}]{Kovac-Zagar-curvature16}
\bibinfo{author}{Kova\v{c}, B.}, \bibinfo{author}{\v{Z}agar, E.},
  \bibinfo{year}{2016}.
\newblock \bibinfo{title}{Curvature approximation of circular arcs by
  low-degree parametric polynomials}.
\newblock \bibinfo{journal}{J. Numer. Math.} \bibinfo{volume}{24},
  \bibinfo{pages}{95--104}.
%Type = Incollection
\bibitem[{M{\o}rken(1995)}]{Morken-93-Parametric-Parabola}
\bibinfo{author}{M{\o}rken, K.}, \bibinfo{year}{1995}.
\newblock \bibinfo{title}{Parametric interpolation by quadratic polynomials in
  the plane}, in: \bibinfo{booktitle}{Mathematical methods for curves and
  surfaces (Ulvik, 1994)}. \bibinfo{publisher}{Vanderbilt Univ. Press},
  \bibinfo{address}{Nashville, TN}, pp. \bibinfo{pages}{385--402}.
%Type = Article
\bibitem[{Vavpeti\v{c}(2020)}]{Vavpetic-2020-optimal-circular-arcs}
\bibinfo{author}{Vavpeti\v{c}, A.}, \bibinfo{year}{2020}.
\newblock \bibinfo{title}{Optimal parametric interpolants of circular arcs}.
\newblock \bibinfo{journal}{Comput. Aided Geom. Design} \bibinfo{volume}{80},
  \bibinfo{pages}{Paper No. 101891, 9 pp}.
%Type = Article
\bibitem[{Vavpeti\v{c} and
  \v{Z}agar(2019)}]{Vavpetic-Zagar-2018-optimal-circle-arcs}
\bibinfo{author}{Vavpeti\v{c}, A.}, \bibinfo{author}{\v{Z}agar, E.},
  \bibinfo{year}{2019}.
\newblock \bibinfo{title}{A general framework for the optimal approximation of
  circular arcs by parametric polynomial curves}.
\newblock \bibinfo{journal}{J. Comput. Appl. Math.} \bibinfo{volume}{345},
  \bibinfo{pages}{146--158}.
%Type = Article
\bibitem[{Vavpeti\v{c} and
  \v{Z}agar(2021)}]{Vavpetic-Zagar-21-circle-arcs-Hausdorff}
\bibinfo{author}{Vavpeti\v{c}, A.}, \bibinfo{author}{\v{Z}agar, E.},
  \bibinfo{year}{2021}.
\newblock \bibinfo{title}{On optimal polynomial geometric interpolation of
  circular arcs according to the {H}ausdorff distance}.
\newblock \bibinfo{journal}{J. Comput. Appl. Math.} \bibinfo{volume}{392},
  \bibinfo{pages}{Paper No. 113491, 14 pp}.

\end{thebibliography}

\end{document}